\renewcommand{\le}{\leqslant}
\renewcommand{\ge}{\geqslant}
\newcommand{\N}{\mathbb{N}}
\newcommand{\ff}{\mathcal{F}}
\newcommand{\s}{\mathcal{S}}
\newcommand{\g}{\mathcal{G}}
\newcommand{\aaa}{\mathcal{A}}
\newcommand{\bb}{\mathcal{B}}
\newcommand{\h}{\mathcal H}
\newcommand{\aaaa}{\mathfrak{A}}
\newcommand{\T}{\mathcal{T}}
\newcommand{\W}{\mathcal{W}}
\newtheorem{thm}{Theorem}
\newtheorem{lem}[thm]{Lemma}
\newtheorem{cor}[thm]{Corollary}
\newtheorem{prop}[thm]{Proposition}
\newtheorem{conj}{Conjecture}
\newtheorem{obs}[thm]{Observation}
\title{An almost complete $t$-intersection theorem for permutations}
\date{}
\begin{document}
\maketitle
\begin{abstract}
For any $\epsilon>0$ and $n>(1+\epsilon)t$, $n>n_0(\epsilon)$ we determine the size of the largest $t$-intersecting family of permutations, as well as give a sharp stability result. This resolves a conjecture of Ellis, Friedgut and Pilpel (2011) and shows the validity of conjectures of
Frankl and Deza (1977) and Cameron (1988) for $n>(1+\epsilon )t$. We note that, for this range of parameters, the extremal examples are not necessarily trivial, and that our statement is analogous to the celebrated Ahlswede-Khachatrian theorem. The proof is based on the refinement of the method of spread approximations, recently introduced by Kupavskii and Zakharov (2022).
\end{abstract}
\section{Introduction}
Let $[n] = \{1,\ldots, n\}$ stand for the standard $n$-element set and let $2^{[n]}, {[n]\choose k}$ denote its power set and the set of all $k$-element subsets. One of the classical results in extremal combinatorics is the Erd\H os--Ko--Rado theorem \cite{EKR}. We say that a family of sets is {\it intersecting} if any two sets from the family have non-empty intersection. The EKR theorem states that for $n\ge 2k$ any intersecting family $\ff\subset {[n]\choose k}$ has size at most ${n-1\choose k-1}$. This result was highly influential, and by now grew into a subfield of extremal combinatorics, studying collections of objects with forbidden intersections.

Erd\H os, Ko and Rado \cite{EKR} also showed that the largest {\it $t$-intersecting} family of sets in ${[n]\choose k}$ has size ${n-t\choose k-t}$, provided $n>n_0(k)$. A family is {\it $t$-intersecting} if any two sets from the family intersect in at least $t$ elements. Note that the extremal example for this result, as well as the `classical' EKR theorem, is the family of all sets that contain a fixed $t$-element set. Later, Frankl \cite{F1} and Wilson \cite{W} determined the exact value of $n_0(k)$: the same conclusion holds for $n\ge (t+1)(k-t+1)$. For smaller values of $n$, other $t$-intersecting families become larger. For $i=0,\ldots, k-t$ define the Frankl families
$$\bb_i = \Big\{F\in {[n]\choose k}: |F\cap [t+2i]|\ge t+i\Big\}.$$
For $n<(t+1)(k-t+1)$ $|\bb_1|>|\bb_0|$. It turned out, however, that for any values of $n,k,t$ one of $\bb_i$ should be extremal. Frankl \cite{F1} introduced the families defined above and conjectured that for every $n,k,t$ one of them is extremal. This was shown for a wide range of parameters by Frankl and F\"uredi \cite{FF3} and then for all $n,k,t$ by Ahlswede and Khachatrian \cite{AK} in the so-called `Complete $t$-intersection Theorem'. This theorem played an important role in some applications to computer science, in particular the result of Dinur and Safra \cite{DS} from hardness of approximation.

Another very influential forbidden intersections result is due to Frankl and Wilson \cite{FW}. It addresses the so-called Erd\H os--S\'os problem: determine the largest family in ${[n]\choose k}$ with intersection exactly $t$ forbidden. Importantly, it gives an almost sharp result for the case when $k,t$ are linear in $n$ (under some number-theoretic restrictions). This was extremely important for applications, and the Frankl--Wilson theorem and a more general Frankl--R\"odl theorem \cite{FR} was used for several questions in discrete geometry  and Ramsey theory (for applications in discrete geometry, see \cite{FR2}, \cite{Rai1}, \cite{Rai2}).

Forbidden intersections were studied for structures other than families in $2^{[n]}$ or ${[n]\choose k}$: graphs \cite{EFF}, partitions \cite{MeMo} \cite{Kup54}, simplicial complexes \cite{Bor3} \cite{Kup55}, vector spaces \cite{FG}, and permutations. Permutations are by far the most studied object in this respect. Denote by $\Sigma_n$ the collection of permutations $[n]\to [n]$. The study of forbidden intersection theorems for permutations goes back to the paper of Frankl and Deza \cite{DeF}, in which they studied the question of how big a family of permutations from $\Sigma_n$ could be, if any two permutations $\sigma_1,\sigma_2$ from the family agree on at least $t$ points: satisfy $\sigma_1(x) = \sigma_2(x)$ for at least $t$ different $x\in[n]$. They showed that for $t=1$ the answer is $(n-1)!$, and showed that for $t=2,3$ and $n$ of certain form the answer is $(n-2)!$ and $(n-3)!$, respectively.\footnote{The reason behind the latter results is the existence of certain multiply transitive permutation subgroups for certain values of $n$, over which it is possible to do an averaging argument. It is one of the reasons why the algebraic combinatorics community got interested in the problem.} They conjectured that the answer is $(n-t)!$ for $n\ge n_0(t)$.\footnote{Frankl and Deza have also studied the dual question of extremal families with no two permutation satisfying $\sigma_1(x) = \sigma_2(x)$ for at least $t$ distinct $x\in[n]$ and established that the product of the two extremal functions (permutations with intersection at least $t$ and at most $t-1$)  is at most $n!$.}  For each $i=0,\ldots, \lfloor \frac {n-t}2\rfloor $ consider the following collections of families
\begin{align}\label{eqtintfam}  \mathfrak A_k =& \big\{\sigma \aaa_k\tau\ :\ \sigma,\tau\in \Sigma_n\big\}, \text{ where}\\
\notag \aaa_k:=&\big\{\sigma\in \Sigma_n: \sigma(i) = i \text{ for at least }t+k \text{ indices }i\in [t+2k]\big\}.\end{align}
Clearly, for each $i$ each family from $\mathfrak A_i$ is $t$-intersecting. The families from $\aaaa_0$ all have size $(n-t)!$ and thus provide a tight example for the Frankl--Deza conjecture.

The question of Frankl and Deza has a long history. First, it was proved by Cameron and Ku \cite{CK} and Larose and Malvenuto \cite{LM} that for $t=1$ the only extremal examples are of the form \eqref{eqtintfam}. Generalizing it to arbitrary $t$, Cameron conjectured that for $n\ge n_0(t)$ all extremal examples belong to $\aaaa_0$. In 2011, Ellis, Friedgut and Pilpel \cite{EFP} showed that the conjectures of Frankl and Deza and of Cameron hold (with the assumption  $n>n_0(t)$). They made a far-reaching conjecture in the spirit of the Complete $t$-Intersection Theorem for sets.

\begin{conj}\label{conjefp} For any $n,t$ the largest $t$-intersecting family of permutations must belong to one of the collections $\aaaa_k$.
\end{conj}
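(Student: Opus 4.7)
My plan is to adapt the spread-approximation machinery of Kupavskii and Zakharov to the permutation setting. The first move is to encode each $\sigma\in\Sigma_n$ as the perfect matching $G_\sigma=\{(i,\sigma(i)):i\in[n]\}$ in $K_{n,n}$, so that a $t$-intersecting family $\ff\subset\Sigma_n$ becomes a family of perfect matchings pairwise sharing at least $t$ edges. Under this encoding, the collection $\aaaa_k$ consists precisely of those matchings containing at least $t+k$ edges of some fixed partial matching whose left-support is a fixed $(t+2k)$-subset of $[n]$. The goal is therefore to prove that any $t$-intersecting $\ff$ admits a structural approximation by one such "Frankl-type" pattern, and then to bootstrap the approximation to an exact containment.

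Given a threshold $s_0=s_0(\eps,t)$, I would run the standard greedy peeling: while there exists a partial matching $M$ with $|M|\le s_0$ such that the subfamily $\ff_M=\{G\in\ff:G\supseteq M\}$ is "too dense" compared to what a genuinely spread family would permit, add $M$ to a collection of kernels $\s$ and remove $\ff_M$ from $\ff$. What remains is a residue $\ff^*$ which is $r$-spread for some $r$ of order a large power of $n$, while $\s$ inherits a weakened pairwise intersection property from $\ff$: for any two kernels $M_1,M_2\in\s$, one can lift them to permutations in $\ff$ whose common edges, once their extensions outside $M_1\cup M_2$ are averaged away, force $|M_1\cap M_2|$ to be close to $t$. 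The main dividend of the spread property is a matching-variant of the Alweiss--Lovett--Wu--Zhang lemma that bounds $|\ff^*|$ by a factor of $n^{-\Omega(1)}\cdot(n-t)!$, which is negligible against the extremal bound.

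The heart of the argument is then the analysis of $\s$. Since the kernels are uniformly bounded in size, $\s$ lives inside a finite universe once we restrict to a large $\Delta$-system subcore; a standard shifting/compression argument applied to $\s$, together with the Ahlswede--Khachatrian classification in the symmetric group's "pattern space," should force $\s$ (up to a small residue) to concentrate on kernels contained in a single $(t+2k)$-set for a specific $k\le k_0(\eps)$ and refining a fixed partial matching with at least $t+k$ edges. Reassembling, $\ff$ is almost entirely contained in a translate $\sigma\aaa_k\tau$. The exact statement then follows by a one-step stability/bootstrap: any element of $\ff$ outside this translate can be exchanged against a dense set of elements of $\aaaa_k\cap\ff$ to build a larger $t$-intersecting family, contradicting maximality. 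The same argument, run with a weaker deficit, yields the sharp stability claim.

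The principal obstacle will be calibrating the spread threshold in the regime $n=(1+\eps)t$. Unlike the subset case, the extension count of a partial matching of size $s$ to a perfect matching of $K_{n,n}$ is $(n-s)!$ only when $s$ is small; as $s$ approaches $t$, the relevant "permanent" starts interacting nontrivially with the $t$-intersection condition, and the natural spread bound drifts uncomfortably close to the extremal $(n-t)!$. Engineering the peeling so that the residue $\ff^*$ is \emph{genuinely} smaller than $(n-t)!/\mathrm{poly}(n)$ — and doing so uniformly for all $k\le k_0$, so that the correct $\aaaa_k$ can be identified without losing multiplicative constants — is where I expect the bulk of the technical work to lie. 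A secondary difficulty is the boundary regime where two families $\aaaa_k$ and $\aaaa_{k+1}$ have nearly equal size: here the uniqueness part of the conjecture demands a very tight comparison, and both the spread estimate and the $\Delta$-system step must be sharp rather than merely asymptotic.
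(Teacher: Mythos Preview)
The statement you are attempting is a \emph{conjecture}; the paper does not prove it for all $n,t$. What the paper establishes is the partial result Theorem~\ref{thmbigt}: the conjecture holds when $n>(1+\eta)t$ and $n\ge n_0(\eta)$. Your high-level plan (encode permutations as matchings, run a greedy spread approximation to get kernels $\s$ with a slightly weakened intersection condition, bound the spread residue via an ALWZ-type lemma, analyse $\s$, then bootstrap) is indeed the skeleton of the paper's proof of that partial result. But as a proof of the full conjecture your proposal has a genuine gap, and even as a proof of the $n>(1+\eta)t$ case several of your steps are placeholders rather than arguments.

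Concretely: (1) there is no ``Ahlswede--Khachatrian classification in the symmetric group's pattern space'' to invoke. The paper replaces this nonexistent black box by a two-stage spread approximation (first a rough $t'$-intersecting $\s$ with $t'=t-n^{1-\epsilon/2}$, then a refined $t$-intersecting $\s'$ of uniformity $t+n^{1-\epsilon/6}$) followed by an explicit layer-by-layer counting argument (the ``peeling'' of Lemma~\ref{lemkeyred} and the bounds $f(j)$ in~\eqref{eqfj}) that pins down a single dominant layer $\W_k$ and locates two sets $A,B\in\W_k$ with $|A\cap B|=t$ whose union plays the role of $[t+2k]$. Your single peeling pass with an unspecified $\Delta$-system step would not yield this. (2) Every quantitative step of the method --- the bound $|\ff'|\le n^{-\frac15\epsilon t}(n-t)!$, Lemma~\ref{lempeelbound}, the $10\epsilon t$-spreadness in Corollary~\ref{corstage1} --- uses $n-t\ge c\epsilon n$ in an essential way; when $n-t=o(n)$ the spread parameter one can afford drops below the ALWZ threshold and the residue is no longer negligible against $(n-t)!$. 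You flag this as ``the principal obstacle'' but offer no mechanism to overcome it, and none is known: this is precisely why the conjecture remains open for $n\le (1+o(1))t$.
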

We should mention that a particular case of the conjecture was proved already by Frankl and Deza \cite{DeF} for $t$ being very close to $n$, i.e., for $n\ge n_0(n-t)$. In that regime, they showed that the family $\aaa_{\lfloor \frac{n-t}2\rfloor}$ is extremal.  In particular, comparing the sizes of $\aaa_i$ for different $i$, the conjecture suggests that $\aaa_0$ must be the largest $t$-intersecting family of permutations iff $n>2t$.\footnote{The size of any family in $\aaaa_1$ is $(t+2)(n-t-1)!-(t+1)(n-t-2)!$. The value $(n-t)!$ is bigger iff $(n-t-1)(n-2t-2)+(t+1)>0$.}  It has seen some great progress over the last several years. First, Ellis and Lifshitz \cite{EL} showed that the conjectures of Deza--Frankl, Cameron, and Ellis--Friedgut--Pilpel are valid for $t<c\frac{\log n}{\log\log n}$.  More recently, Zakharov and the author \cite{KuZa} managed to extend the validity of to $n>Ct/\log^2 t$ with some large $C$. Then Keller, Lifshitz, Minzer and Sheinfield \cite{KLMS} extended it to $n>Ct$ with some large $C$. In this paper, we prove these conjectures for $n>(1+\eta)t$ with arbitrary $\eta>0$.

\begin{thm}\label{thmbigt}
Let $\eta>0$ be some constant and $n,t$ be integers such that $n>(1+\eta)t$ and $n\ge n_0(\eta)$.  If $\ff\subset \Sigma_n$ is $t$-intersecting then $|\ff| \le \max_{k\in \N} |\aaa_k|$. Moreover, if $\ff$ is not fully contained in some $\ff\in \aaaa_k$ then $|\ff| \le (1-\frac 1e+o(1))\max_{k\in \N} |\aaa_k|$.
\end{thm}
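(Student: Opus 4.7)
The plan is to adapt the method of spread approximations of Kupavskii--Zakharov to the permutation setting. Identify each $\sigma \in \Sigma_n$ with its graph $G_\sigma := \{(i,\sigma(i)):i\in[n]\}\subset [n]\times[n]$, so that a $t$-intersecting family of permutations becomes a family of perfect bipartite matchings with pairwise intersection at least $t$. Under this identification, $\aaa_k$ corresponds to matchings that coincide with the identity matching on at least $t+k$ edges of the block $[t+2k]\times[t+2k]$, and every $F\in\aaaa_k$ is the double-coset orbit of such a pattern. In these terms, Theorem~\ref{thmbigt} says that any $t$-intersecting family of matchings either is essentially contained in such a block pattern or loses a factor $1-1/e+o(1)$.

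First I would run the refined spread-approximation procedure to extract from $\ff$ a \emph{seed family} $\s$ of partial matchings $K$ of size much smaller than $n-t$, with the property that almost every $\sigma\in\ff$ extends some $K\in\s$, and that the residual family $\ff_K := \{\sigma\in\ff:G_\sigma\supset K\}$ is sufficiently spread (no small extension of $K$ dominates a constant fraction of $\ff_K$). The calibration of the parameters -- seed size, spreadness constant, and exceptional loss -- must depend on $\eta$, because for $n=(1+\eta)t$ the correct value of $k$ for which $\aaa_k$ is extremal may itself be as large as $\Omega(\eta^{-1}t)$, and the analysis must be sharp enough to distinguish these candidates.

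Next, I would upgrade the seed family itself to a $t$-intersecting family of partial matchings: if two kernels $K_1, K_2 \in \s$ shared fewer than $t$ edges, spreadness would produce extensions to full permutations with intersection less than $t$, contradicting the hypothesis on $\ff$. Applying an Ahlswede--Khachatrian-type analysis to this $t$-intersecting family of small partial matchings pins down a common core: there exist $k\ge 0$, sets $T,S\subset[n]$ of size $t+2k$, and a bijection $\phi:T\to S$ such that every $K\in\s$ agrees with $\phi$ on at least $t+k$ points. Up to double cosets, $\s$ then lies inside a single $F\in\aaaa_k$.

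Finally, for the upper bound and the stability, compare $|\ff|$ with $|F|$. The permutations of $\ff$ whose graphs extend some $K\in\s$ lie in $F$ automatically; the remaining permutations are bounded by the exceptional term from the spread-approximation step, which is $o(1)|\aaa_k|$. If $\ff$ is not contained in any member of any $\aaaa_k$, then a positive fraction of its permutations avoids the prescribed block pattern on $T\times S$, and a counting estimate shows the corresponding subfamily has density at most $1-1/e+o(1)$ relative to $|\aaa_k|$, the factor $1/e$ arising as the derangement-type probability that a uniform random permutation of the non-core coordinates misses the prescribed pattern. The main obstacle is Step 1: the spread-approximation iteration must be refined so that the loss at each scale is summable in $\eta$ and in the target $k$, because naive iteration loses polynomial factors in $t$ that prevent reaching the tight threshold $n=(1+\eta)t$ for all $k$ simultaneously.
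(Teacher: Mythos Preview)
Your high-level outline matches the paper's approach: spread approximation, a $t$-intersecting seed family, structural analysis to locate a block pattern, and a derangement count for stability. Two of your steps, however, hide the actual work.

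First, in Step~2 you assert that spreadness alone forces $\s$ to be $t$-intersecting. In the paper's first pass (Theorem~\ref{thm2}) the spread parameter one can afford while keeping the seed size below $(1+\epsilon)t$ is only $r=n^{\epsilon/2}$, and the coupling argument then only yields $t'$-intersection with $t'=t-n^{1-\epsilon/2}$. To recover genuine $t$-intersection the paper runs a bootstrap: it applies a peeling argument to the rough $t'$-intersecting $\s$ to locate a $t'$-element partial permutation $X$ with $\ff(X)$ very dense in $\Sigma_n(X)$ (Theorem~\ref{thmstage1}, Corollary~\ref{corstage1}), and then re-approximates around such $X$'s with the much larger spread parameter $10\epsilon t$ (Theorem~\ref{thmstage2}). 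Only this second seed family is $t$-intersecting, with seeds of size at most $t+n^{1-\epsilon/6}$. You flag Step~1 as the obstacle but do not describe this two-stage mechanism.

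Second, and more seriously, Step~3 invokes an ``Ahlswede--Khachatrian-type analysis'' on $\s$ as a black box to produce $T,S,\phi$. No such result is available for a non-uniform family of partial permutations. The paper's substitute is the peeling procedure of Section~\ref{sec21}: it stratifies $\s$ into layers $\W_k$ of uniformity $t+k$, shows via Lemma~\ref{lemkeyred}(iii) and the explicit bound $|\W_k|\le \sum_j f(j)$ with $f(j)=\binom{t}{t-j}\binom{k}{j}^2 k^{k-j}$ that at most one layer with $k\le t^{0.01}$ carries non-negligible mass (Lemma~\ref{lempeelbound}, Observation~\ref{obs7}), and then proves by a direct triple-intersection count that almost all of $\W_k$ is contained in a single $(t+2k)$-set $A\cup B$ while the layers $\g_j$ below $k$ are negligible. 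This stratified analysis is the technical core of the paper; your sketch treats it as given, and your conclusion that \emph{every} $K\in\s$ agrees with $\phi$ on $t+k$ points is stronger than what the argument actually delivers.

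A minor correction: for fixed $\eta$ the extremal index satisfies $k=O(\eta^{-1})$, a constant depending only on $\eta$ (from $t_k\sim \tfrac{k}{k+1}n$), not $\Omega(\eta^{-1}t)$ as you write.
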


Thus, this can be seen as the first Ahlswede-Khachatrian type result for permutations, in which families from $\aaaa_k$ for all $k$ appear.\footnote{We note without a proof that for constant $k$ and $n\to \infty$ the families $\aaaa_k$ are extremal in the interval $t\in [t_k,t_{k+1}]$, where $t_k=(1+o(1))\frac{k}{k+1}n$.}
Our theorem comes with a stability version, which is in fact tight, that extends similar results of \cite{EFP} and \cite{KLMS}. Many of the forbidden intersection results have stability versions: if a family has size close to extremal, then it has structure resembling that of the extremal example. Classical stability results on the EKR theorem are the Hilton--Milner theorem \cite{HM} and Frankl's degree theorem \cite{Fra4} (cf. also \cite{KZ2}).  Stability results on the Ahlswede-Khachatrian theorem include \cite{AK2}, \cite{Fried}. Ellis, Keller and Lifshitz \cite{EKL} managed to prove a stability result for the Erd\H os--S\'os problem.

Stability/structural results are often a way to get exact results. Once the rough structure of all nearly-extremal examples is determined, it often can be bootstrapped to obtain the extremal result. The classical Delta-system method, developed by Frankl and F\"uredi \cite{FF1, FF2} is one example of such approach. More recently, there was an impressive development of the methods coming from Boolean Analysis, based on junta approximations and hypercontractivity. Using juntas for the study of forbidden intersection problems started with the paper of Dinur and Friedgut \cite{DF} and then was greatly developed in the recent works of Ellis, Keller and Lifshitz \cite{EKL, EL, KL}. Hypercontractivity and sharp threshold results for the biased cube, as well as their applications to different extremal combinatorics questions were studied  in the recent paper by Keevash, Lifshitz, Long and Minzer \cite{KLLM}.

Another approach was suggested by Zakharov and the author \cite{KuZa}. It was developed around the recent breakthrough by Alweiss, Lovett Wu and Zhang \cite{Alw} on the Erd\H os--Rado sunflower conjecture. The approach is purely combinatorial, and, importantly, works with a large class of sufficiently quasirandom structures, such as partitions and permutations, which were previously studied mostly using algebraic techniques (Hoffman bounds combined with representation theory for permutations). Later, Keller, Lifshitz, Minzer and Sheinfeld \cite{KLM, KLMS} made a similar extension of the hypercontractivity approach.

We note that the authors of \cite{KLMS} also gave a two-family version of their result. It is also possible within our framework, but we decided to write the proofs for one family for conciseness.

\section{Proof of Theorem~\ref{thmbigt}}
Recall that $\Sigma_n$ is the set of permutations on $n$ elements. It will be convenient to view $\Sigma_n$ as a family of sets. One way is to treat it as an $n$-uniform subfamily in ${[n]^2 \choose n}$ by identifying a permutation $\sigma:[n]\to [n]$ with the set $\{(1,\sigma(1)),\ldots, (n,\sigma(n))\}\in {[n]^2\choose n}$. We will mostly use set terminology for permutations. We say that $S\subset [n]^2$ is a {\it partial permutation} if $S\subset \sigma$ for a permutation $\sigma\in\Sigma_n$. For any partial permutation $S$ we have $|\Sigma_n(S)| = (n-|S|)!$. If $S$ is not a partial permutation then $|\Sigma_n(S)| = 0$.
We denote by $\Sigma_n^{\le q}$ the family of partial permutations with $\le q$ elements. We use the following notation that is standard when working with families of sets. 
For sets $X\subset Y$ and a family $\mathcal S$ we put
\begin{align*}
\ff(X):=&\ \{F\setminus X:X\subset F, F\in \ff\},\\
\ff[X]:=&\ \{F: X\subset F, F\in \ff\},\\
\ff(X, Y):=&\ \{F\setminus X: F\cap Y = X\},\\
\ff[\s]:=&\ \bigcup_{A\in \s}\ff[A].
\end{align*}


\subsection{Sketch of the proof.} The rough structure of the proof is described below. We introduce and use a refinement of the spread approximation technique. Given the previous results, say, from \cite{KuZa}, we may for simplicity assume that $t \ge n^{1-\epsilon/2}$ for $\epsilon =0.01\eta$.   Let $\ff$ be the largest $t$-intersecting family of permutations.

 First, we find a family $\s$ of partial permutations of size somewhat smaller than $n$ (a concrete dependence is crucial here), such that it covers most of the extremal family $\ff$. At the same time, the family $\s$ is $t'$-intersecting for some $t'$ that is a bit smaller than $t$: $t' = t-n^{1-\epsilon/2}$. In this part, we trade $t$ for $t'$ in order to guarantee that the sets in $\s$ have a relatively small uniformity.

Second, we use the ``peeling'' technique to analyze the structure of $\s$. In this part, we use the condition on the uniformity of $\s$ in order to guarantee that the main contribution to the size of the family comes from partial permutations from $\s$ of size at most $t+k$, where $k = n^{1-\epsilon}$. This, in turn, allows to get a subset $X$ of size $t'$, such that $\ff(X)$ is rather dense in $\Sigma_n(X)$ (much denser than the original family is in $\Sigma_n$).

Third, we employ a greedy procedure that at each step uses the set $X$ as above, and manage to construct a much better spread approximation $\s'$ of the family. In particular, $\s'$ only contains sets of size $t+O(n^{1-\epsilon})$ and is $t$-intersecting.

The last step is to apply and analyze the peeling procedure for the refined approximation $\s'$. Within the procedure, we detect the most `important' layer and then show that this layer must be the only layer present in $\s'$ and thus in $\ff$, assuming that $\ff$ is extremal. After some analysis, this allows us to discern the structure isomorphic to $\aaa_k$ for some $k$.

\subsection{Preliminaries} We say that a family $\ff\subset 2^{[n]}$ is {\it $r$-spread}, if for any $X\subset [n]$ we have $|\ff(X)|\le r^{-|X|}|\ff|$. We begin with some simple observations concerning the property of being $r$-spread.

\begin{obs}\label{obs33}
  If for some $\alpha>1$ we have a family $\ff\subset {[n]\choose \le k}$ such that $|\ff|\ge \alpha^k$, then there is a set $X$ such that $|\ff(X)|>1$  and $\ff(X)$ is $\alpha$-spread.
\end{obs}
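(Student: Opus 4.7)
My plan is to take $X^*$ to maximize the quantity $\phi(X):=\alpha^{|X|}|\ff(X)|$ over all sets $X$ with $|\ff(X)| \ge 2$, and to claim that $\ff(X^*)$ is the required $\alpha$-spread family. Such a maximizer exists: the empty set has $\phi(\emptyset) = |\ff| \ge \alpha^k$, and since the conclusion would otherwise be vacuous we may tacitly assume $|\ff| \ge 2$.

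To verify that $\ff(X^*)$ is $\alpha$-spread, I would fix a nonempty $Z$ disjoint from $X^*$ and use that $\ff(X^*)(Z) = \ff(X^* \cup Z)$ in the disjoint case, which reduces the task to proving $|\ff(X^* \cup Z)| \le \alpha^{-|Z|}|\ff(X^*)|$. I would then split into two cases according to the value of $|\ff(X^* \cup Z)|$. If $|\ff(X^* \cup Z)| \ge 2$, then $X^* \cup Z$ is itself an eligible competitor in the maximization, so $\phi(X^* \cup Z) \le \phi(X^*)$; after cancelling $\alpha^{|X^*|}$ this is exactly the desired inequality.

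The delicate case is $|\ff(X^* \cup Z)| \le 1$, in which $X^* \cup Z$ fails to be an eligible competitor, so the maximality of $\phi(X^*)$ does not apply directly. When $|\ff(X^* \cup Z)| = 0$ the inequality is trivial. When $|\ff(X^* \cup Z)| = 1$, I plan to exploit the uniformity hypothesis: some $F \in \ff$ with $|F| \le k$ contains $X^* \cup Z$, forcing $|X^*| + |Z| \le k$. Combined with the maximality bound $\alpha^{|X^*|}|\ff(X^*)| \ge \phi(\emptyset) = |\ff| \ge \alpha^k$, this gives $|\ff(X^*)| \ge \alpha^{k-|X^*|} \ge \alpha^{|Z|}$, hence $1 \le \alpha^{-|Z|}|\ff(X^*)|$, as required.

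The only real obstacle is this last case: without the size cap $k$, the argument breaks down (nested near-singleton families could violate the spread property), so the role of $\ff \subset \binom{[n]}{\le k}$ is precisely to quarantine that degeneracy. Everything else is bookkeeping around the fact that maximizing a product of a size-penalty and a link-size is the standard recipe for producing spread subfamilies.
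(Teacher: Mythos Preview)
Your argument is correct and is essentially the paper's: both pick an extremal $X$ (one that makes $\alpha^{|X|}|\ff(X)|$ large) and deduce $\alpha$-spreadness from that extremality. The paper instead takes an \emph{inclusion}-maximal $X$ with $|\ff(X)|>\alpha^{-|X|}|\ff|$, which immediately yields $|\ff(X)|>\alpha^{-|X|}\alpha^k\ge 1$ and so sidesteps your separate handling of the case $|\ff(X^*\cup Z)|=1$, but the underlying mechanism is the same.
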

\begin{proof} if $\ff$ is $\alpha$-spread then we are done. Otherwise, find an inclusion-maximal set $X$ such that $|\ff(X)|> \alpha^{-|X|}|\ff|$. Note that, by $|X|\le k$ and $|\ff|\ge \alpha^k$ we have $|\ff(X)|>1$. We claim that $\ff(X)$ is $\alpha$-spread. Indeed, arguing indirectly, assume that there is a nonempty set $Y$, $Y\cap X = \emptyset$, such that $|\ff(X\cup Y)|>\alpha^{-|Y|}|\ff(X)|> \alpha^{-|X|-|Y|}|\ff|$. But then $Y\cup X$ is a valid choice for $X$, which was supposed to be inclusion-maximal.
\end{proof}
Arguing as in the proof of Observation~\ref{obs33}, we get the following observation.
\begin{obs}\label{obs34}
  Fix $\alpha>1$ and consider a family $\ff$. If $X$ is an inclusion-maximal set with the property that $\ff(X)\ge \alpha^{-|X|}|\ff|$, then $\ff(X)$ is $\alpha$-spread.
\end{obs}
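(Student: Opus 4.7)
The plan is to mirror the argument given in the proof of Observation~\ref{obs33}, since the statement is essentially a one-line variant. First I would assume, for contradiction, that $\ff(X)$ fails to be $\alpha$-spread. Unpacking the definition of the spread property applied to $\ff(X)$, whose ground set is $[n]\setminus X$, this produces a nonempty set $Y\subseteq [n]\setminus X$ (in particular $Y\cap X=\emptyset$) with
\[ |\ff(X)(Y)| > \alpha^{-|Y|}|\ff(X)|. \]

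The next step is to unfold the iterated notation via the identity $\ff(X)(Y) = \ff(X\cup Y)$: a set of the form $F\setminus X\in\ff(X)$ contains $Y$ precisely when $X\cup Y\subseteq F$, and in that case $(F\setminus X)\setminus Y = F\setminus(X\cup Y)$. Substituting this into the displayed inequality and combining with the hypothesis $|\ff(X)|\ge \alpha^{-|X|}|\ff|$ yields
\[ |\ff(X\cup Y)| > \alpha^{-|Y|}\cdot \alpha^{-|X|}|\ff| = \alpha^{-|X\cup Y|}|\ff|. \]
Since $Y$ is nonempty and disjoint from $X$, the set $X\cup Y$ strictly contains $X$ while still satisfying the defining inequality, contradicting the inclusion-maximality of $X$.

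I do not foresee a substantive obstacle here: the proof is a near-verbatim transcription of the argument in Observation~\ref{obs33}, and the only bookkeeping item worth checking is the identity $\ff(X)(Y)=\ff(X\cup Y)$, which is immediate from the definitions introduced at the start of Section~2. The one conceptual difference from Observation~\ref{obs33} is that here we are not free to choose $X$ ourselves — we are handed an inclusion-maximal $X$ satisfying a slightly weaker density condition (with $\ge$ rather than $>$) — but this has no bearing on the contradiction argument, since the new set $X\cup Y$ that we produce still satisfies the strict inequality needed to violate maximality.
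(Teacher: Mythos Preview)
Your proof is correct and follows exactly the approach the paper intends: the paper simply states that Observation~\ref{obs34} follows by ``arguing as in the proof of Observation~\ref{obs33}'', and your contradiction argument via a nonempty $Y$ disjoint from $X$ with $|\ff(X\cup Y)|>\alpha^{-|X\cup Y|}|\ff|$ is precisely that argument. Your remark about the $\ge$ versus $>$ discrepancy is accurate and harmless, since the produced set $X\cup Y$ satisfies the strict inequality and hence also the non-strict one required for maximality to be violated.
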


We say that $W$ is a {\it $p$-random subset} of $[n]$ if each element of $[n]$ is included in $W$ with probability $p$ and independently of others. The spread approximation technique is based on the following theorem.
\begin{thm}[\cite{Alw}, a sharpening due to \cite{Tao}]\label{thmtao}
  If for some $n,k,r\ge 1$ a family $\ff\subset {[n]\choose \le k}$ is $r$-spread and $W$ is an $(\beta\delta)$-random subset of $[n]$, then $$\Pr[\exists F\in \ff\ :\ F\subset W]\ge 1-\Big(\frac 5{\log_2(r\delta)} \Big)^\beta k.$$
\end{thm}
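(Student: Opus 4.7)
The plan is to replicate the iterative spread-to-cover argument of Alweiss--Lovett--Wu--Zhang in the sharpened form due to Rao and Tao. First, I reduce to the uniform case by passing to a single size class: among the at most $k+1$ size strata of $\ff$, one contains $\ge |\ff|/(k+1)$ sets, and this uniform subfamily inherits a slightly weakened spread property (this loss will be absorbed into the final constant). Next, couple $W$ with a union $W_1 \cup \cdots \cup W_\beta$ of $\beta$ independent $\delta$-random subsets, using the inequality $1-(1-\delta)^\beta \le \beta\delta$: a $(\beta\delta)$-random set stochastically dominates such a union, so it suffices to cover some $F\in\ff$ by $W_1\cup\cdots\cup W_\beta$.

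The heart of the argument is a one-step spread lemma of the following form: if $\g\subseteq \binom{[n]}{\le m}$ is nonempty and $r$-spread and $W'$ is $\delta$-random, then with probability at least $1-5/\log_2(r\delta)$ there is a $G\in\g$ with $|G\setminus W'|\le m/2$. Granting this, one iterates: at step $i$ one either finds $G\subseteq W_1\cup\cdots\cup W_i$ (success) or obtains some $G_i$ with a small uncovered remainder, and replacing $\g$ by the family of these small remainders produces a new $r$-spread family on a smaller ground set with uniformity halved. Since the uniformity cannot be halved more than $\log_2 k$ times without forcing $\emptyset$ into the residual family (which is itself success), the iteration must terminate. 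The rounds being independent, the probability that every round fails is at most $(5/\log_2(r\delta))^\beta$, and a union bound over the $O(\log k)\le k$ reduction stages absorbs into the leading factor of $k$ in the stated bound.

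The main obstacle is the one-step lemma itself: it requires a careful averaging/first-moment argument that converts the $r$-spread property (no small ``core'' $X$ appears in more than an $r^{-|X|}$ fraction of members of $\g$) into an anti-concentration estimate for the random variable $|G\setminus W'|$ when $G$ is sampled from $\g$. The Tao sharpening replaces the original clean-but-lossy dyadic argument of \cite{Alw} with a direct two-case analysis balanced at the optimal threshold, which is precisely what produces the constant $5$ in the denominator rather than the larger constant in earlier versions. A secondary delicate point is verifying that the residual family remains $r$-spread on its own reduced ground set with only a benign loss; this follows from re-centering the spread inequality after conditioning on the outcome of $W'$ and using that the ``core'' of the chosen $G_i$ was itself produced by an $r$-spread sample, so fibers over it retain their spread structure up to the unavoidable conditioning factor.
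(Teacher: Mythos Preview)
The paper does not contain a proof of this theorem: it is quoted as a known result from \cite{Alw} with the sharpening from \cite{Tao}, and is used as a black box in the proof of Lemma~\ref{lemtint}. There is therefore no proof in the paper to compare your proposal against.

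That said, your sketch has a genuine structural gap. The one-step lemma you state---that with probability at least $1-5/\log_2(r\delta)$ some $G$ satisfies $|G\setminus W'|\le m/2$---is the older ``halving'' picture, not the sharpening of \cite{Tao}, and the way you combine it with $\beta$ rounds does not yield the claimed bound. Your scheme requires $\lceil\log_2 k\rceil$ successful halvings to force the residual to $\emptyset$; hence for $\beta<\log_2 k$ it cannot possibly succeed even if every round ``works'', whereas the theorem is stated for arbitrary $\beta$. The actual argument in \cite{Tao} is an \emph{expectation} shrinkage: the one-step lemma produces, for a uniformly random $F\in\ff$ and an independent $\delta$-random $W'$, a refinement $F'\subseteq F\setminus W'$ such that the refined family remains essentially $r$-spread and $\mathbb{E}\,|F'|\le c\cdot m$ with $c=5/\log_2(r\delta)$. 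Iterating over the $\beta$ independent pieces gives $\mathbb{E}\,|F^{(\beta)}|\le c^\beta k$, and a single application of Markov's inequality then yields $\Pr[F^{(\beta)}\ne\emptyset]\le c^\beta k$. The factor $k$ thus arises from Markov at the very end, not from a ``union bound over the $O(\log k)\le k$ reduction stages''. Your last two sentences conflate these two mechanisms, and the sentence ``the probability that every round fails is at most $(5/\log_2(r\delta))^\beta$'' is not supported by your stated one-step lemma, which bounds the probability of \emph{halving}, not of full coverage. (The reduction to a single size class in your first step is also unnecessary and would cost an extra factor of $k+1$ not present in the bound.)
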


In the remainder of this subsection, we will prove an estimate on the size of $\aaa_k$. 
We will need the following inequality, valid for any positive integers $n,x$.
\begin{equation}\label{eqfac}
  \frac{(n+x)!}{n!}=\prod_{i=n+1}^{n+x}i\ge \big((n+x)!\big)^{\frac x{n+x}}\ge \Big(\Big(\frac{n+x}e\Big)^{n+x}\Big)^{\frac x{n+x}}=\Big(\frac {n+x}e\Big)^x.
\end{equation}

In what follows, we bound the sizes of $\aaa_i$. For an integer $\ell\in[n]$, we say that a permutation $\sigma\in \Sigma_n$ is an $\ell$-derangement, if $\sigma(i)\ne i$ for each $i\in [\ell]$.
\begin{lem}\label{lemsize}
 For any $n,t,k$ we have $|\aaa_k| \le {t+2k\choose k}(n-t-k)!$. If $k = o(n-t-k)$ then we have $|\aaa_k| = (1+o(1)){t+2k\choose k}(n-t-k)!$. 
\end{lem}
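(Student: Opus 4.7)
The plan is to prove both the unconditional upper bound and the matching asymptotic lower bound by direct counting. For the upper bound, I would observe that $\aaa_k$ is the union over $T\in{[t+2k]\choose t+k}$ of the collections $\mathcal{U}_T:=\{\sigma\in\Sigma_n:\sigma(i)=i\text{ for all }i\in T\}$. Each $\mathcal{U}_T$ has size exactly $(n-t-k)!$ (the remaining $n-t-k$ coordinates may be permuted freely), and there are ${t+2k\choose t+k}={t+2k\choose k}$ choices of $T$, so the union bound immediately yields $|\aaa_k|\le {t+2k\choose k}(n-t-k)!$.

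For the matching lower bound under $k=o(n-t-k)$, the key idea is to replace the union above by a \emph{disjoint} union indexed by the exact set of fixed points inside $[t+2k]$, constrained to have size exactly $t+k$. For each $T\in{[t+2k]\choose t+k}$ let $\mathcal{B}_T$ consist of those $\sigma\in\Sigma_n$ with $\sigma(i)=i$ for $i\in T$ and $\sigma(j)\ne j$ for $j\in[t+2k]\setminus T$. These $\mathcal{B}_T$ are pairwise disjoint subfamilies of $\aaa_k$, so $|\aaa_k|\ge\sum_T|\mathcal{B}_T|$. For a fixed $T$, the size $|\mathcal{B}_T|$ is the number of permutations of $[n]\setminus T$ avoiding fixed points at a prescribed $k$-set (namely $[t+2k]\setminus T$), which by inclusion--exclusion equals $\sum_{i=0}^{k}(-1)^i{k\choose i}(n-t-k-i)!$.

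Factoring out $(n-t-k)!$, the $i$-th summand becomes $(-1)^i{k\choose i}\big/\prod_{j=0}^{i-1}(n-t-k-j)$. Under $k=o(n-t-k)$ the $i=1$ contribution is $-k/(n-t-k)=o(1)$, and the $i\ge 2$ tail is dominated by a convergent series with ratio $O(k/(n-t-k))$, so $|\mathcal{B}_T|=(1-o(1))(n-t-k)!$. Multiplying by ${t+2k\choose k}$ and comparing with the upper bound gives $|\aaa_k|=(1+o(1)){t+2k\choose k}(n-t-k)!$. The only mildly technical step is this tail estimate on the inclusion--exclusion sum; everything else is pure counting.
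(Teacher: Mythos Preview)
Your proposal is correct and follows essentially the same approach as the paper: the upper bound via the union $\aaa_k=\bigcup_{T}\mathcal{U}_T$ and the lower bound via the disjoint families $\mathcal{B}_T$ of permutations fixing exactly $T$ on $[t+2k]$, with the inclusion--exclusion count $\sum_{i=0}^k(-1)^i{k\choose i}(n-t-k-i)!=(1-o(1))(n-t-k)!$ under $k=o(n-t-k)$. The paper merely asserts this last asymptotic, while you spell out the tail estimate; otherwise the arguments are the same.
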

\begin{proof}
  Every permutation in $\aaa_k$ can be described by first choosing some $(t+k)$-element subset of $[t+2k]$ that it fixes, and then defining it on the remainder (that is, choosing a way how to permute the remaining $n-t-k$ elements). Therefore, $|\aaa_k|\le {t+2k\choose k}(n-t-k)!$ for any values of $n,t,k$.

  Let us give a lower bound on $|\aaa_k|$ for $k=o(n-t-k)$. Clearly, $|\aaa_k|$ is at least the number of permutations that fix exactly $t+k$ elements on $[t+2k]$. For any fixed subset $S\in {[t+2k]\choose t+k}$, the number of permutations that fix elements from $S$ and no other element of $[t+2k]$ is the same as the number of $k$-derangements in $\Sigma_{n-t-k}$. Using inclusion-exclusion formula, this number is
  \begin{equation}\label{eqkderan}\sum_{i=0}^k(-1)^i {k\choose i}(n-t-k-i)! = (1+o(1))(n-t-k)!,\end{equation}
  where the equality uses the fact that $k=o(n-t-k)$. The collections of permutations that fix exactly $S_1$, $S_2$ are disjoint for distinct $S_1,S_2\in {[t+2k]\choose t+k}$. Thus, we get that $|\aaa_k|\ge (1+o(1)){t+2k\choose k}(n-t-k)!$.
\end{proof}

\subsection{First spread approximation}  We begin the proof of the main result by constructing the first, `rough' spread approximation. In the next stage, we shall repeatedly apply this theorem to $\ff$ and some of its `dense' subfamilies.

\begin{thm}\label{thm2}
  Let $n\ge 2$, $t\ge1$ be some integers and fix a real number $\epsilon\in (0,1/2)$, such that $n\ge (1+2\epsilon)t$ and $t\ge n^{1-\epsilon/2}$. Suppose that $n\ge n_0(\epsilon)$.  Consider a family $\ff\subset \Sigma_n$ that is $t$-intersecting.   
  Put $t'= t-n^{1-\epsilon/2}$. Then there exist a  $t'$-intersecting family $\mathcal S\subset \Sigma_n^{\le (1+\epsilon)t}$ of partial permutations of size at most $(1+\epsilon)t$ and a family $\ff'$ such that the following holds.
  \begin{itemize}
    \item[(i)] $\ff\setminus \ff'\subset \Sigma_n[\s]$;
    \item[(ii)] for any $B\in \s$ there is a family $\ff_B\subset \ff$ such that $\ff_B(B)$ is $n^{\epsilon/2}$-spread;
    \item[(iii)] $|\ff'|\le n^{-\frac 15\epsilon t}(n-t)!$.
  \end{itemize}
\end{thm}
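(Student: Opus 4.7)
The plan is to build $\s$ by a greedy spread decomposition of $\ff$. Set $\ff_0 := \ff$ and $\s := \emptyset$. At step $i \ge 0$, if $|\ff_i| \le n^{-\epsilon t/5}(n-t)!$ stop and let $\ff' := \ff_i$; this makes (iii) automatic. Otherwise, by Observation~\ref{obs34} with $\alpha := n^{\epsilon/2}$, choose an inclusion-maximal partial permutation $B_i$ with $|\ff_i[B_i]| \ge n^{-\epsilon |B_i|/2}|\ff_i|$, so that $\ff_i(B_i)$ is $n^{\epsilon/2}$-spread. Define $\ff_{B_i} := \ff_i[B_i]$, $\ff_{i+1} := \ff_i \setminus \ff_{B_i}$, and add $B_i$ to $\s$. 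The process terminates, and (i), (ii) hold by construction (with witness $\ff_B := \ff_{B_i}$ for $B = B_i$). To bound $|B_i| \le (1+\epsilon)t$: combining $|\ff_i[B_i]| \le (n-|B_i|)!$ with the spread lower bound and the stopping threshold gives $(n-t)!/(n-|B_i|)! \le n^{\epsilon|B_i|/2 + \epsilon t/5}$; if $|B_i| > (1+\epsilon)t$, then $n-|B_i| \ge \epsilon t$ (using $n \ge (1+2\epsilon)t$) and \eqref{eqfac} yields $(n-t)!/(n-|B_i|)! \ge (\epsilon t/e)^{|B_i|-t}$, which for $n \ge n_0(\epsilon)$ exceeds the right hand side since $\tfrac{1+\epsilon}{2}+\tfrac{1}{5} < 1$ for $\epsilon < 1/2$. (Also, $B_i \ne \emptyset$: otherwise $\ff_i$ itself would be $n^{\epsilon/2}$-spread, and applying Theorem~\ref{thmtao} to two disjoint $\tfrac{1}{2}$-random subsets of $[n]^2$ produces two disjoint permutations in $\ff_i$, contradicting the $t$-intersecting property.)

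The main step is showing that $\s$ is $t'$-intersecting. Fix $B_1, B_2 \in \s$. Spreadness of $\ff_{B_j}(B_j)$ applied to singletons $\{e\} \subset B_{3-j}\setminus B_j$ gives
\[
\E_{\sigma \in \ff_{B_j}}\big|(\sigma \setminus B_j) \cap B_{3-j}\big| \le |B_{3-j}\setminus B_j|\cdot n^{-\epsilon/2} \le 2n^{1-\epsilon/2}.
\]
By Markov, there is $\ff'_{B_j} \subset \ff_{B_j}$ with $|\ff'_{B_j}| \ge \tfrac{1}{2}|\ff_{B_j}|$ and $|(\sigma \setminus B_j) \cap B_{3-j}| \le 4n^{1-\epsilon/2}$ for all $\sigma \in \ff'_{B_j}$; this $\ff'_{B_j}(B_j)$ is still $\tfrac{1}{2}n^{\epsilon/2}$-spread. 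Sample a $\tfrac{1}{2}$-random $W \subset [n]^2$ and apply Theorem~\ref{thmtao} (say with $\beta = \log n$, $\delta = 1/(2\log n)$) once to $\ff'_{B_1}(B_1)$ against $W \setminus B_1$ and once to $\ff'_{B_2}(B_2)$ against $([n]^2 \setminus W)\setminus B_2$. For $n \ge n_0(\epsilon)$ both succeed simultaneously with positive probability, producing $\sigma_j \in \ff'_{B_j}$ with $\sigma_1 \setminus B_1 \subset W$ and $\sigma_2 \setminus B_2 \subset [n]^2 \setminus W$, hence $(\sigma_1 \setminus B_1)\cap(\sigma_2 \setminus B_2) = \emptyset$. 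The disjoint decomposition of $\sigma_1 \cap \sigma_2$ along $B_1, B_2$ then gives
\[
t \le |\sigma_1 \cap \sigma_2| = |B_1 \cap B_2| + |(\sigma_2\setminus B_2)\cap B_1| + |(\sigma_1\setminus B_1)\cap B_2| \le |B_1 \cap B_2| + 8n^{1-\epsilon/2},
\]
so $|B_1 \cap B_2| \ge t - 8n^{1-\epsilon/2} \ge t'$. The constant $8$ is absorbable by tightening the Markov thresholds (or by running the whole argument with $\epsilon' < \epsilon$ in place of $\epsilon$ in Observation~\ref{obs34}).

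The principal obstacle is this $t'$-intersection step: combining spread-based averaging (controlling the two ``cross'' terms $|(\sigma_j \setminus B_j)\cap B_{3-j}|$) with a simultaneous application of Theorem~\ref{thmtao} to two complementary random subsets (killing the ``outside'' intersection $(\sigma_1 \setminus B_1) \cap (\sigma_2 \setminus B_2)$). The greedy construction itself and the uniformity bound $|B_i| \le (1+\epsilon)t$ are a routine application of Observation~\ref{obs34} together with \eqref{eqfac}.
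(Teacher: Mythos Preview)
Your approach is essentially the same as the paper's: a greedy spread decomposition followed by a random-coloring argument using Theorem~\ref{thmtao} to certify that $\s$ is $t'$-intersecting. Two points deserve attention.

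First, the uniformity bound has a slip: from $|B_i|>(1+\epsilon)t$ you cannot conclude $n-|B_i|\ge \epsilon t$ (the inequality goes the wrong way; $|B_i|$ could in principle be close to $n$). What you actually need, and what \eqref{eqfac} gives, is $(n-t)!/(n-|B_i|)!\ge ((n-t)/e)^{|B_i|-t}$ together with $n-t\ge 2\epsilon t$. With this correction your numerics go through exactly as in the paper.

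Second, and more substantively, your Markov-on-singletons argument only yields $|B_1\cap B_2|\ge t-8n^{1-\epsilon/2}$, not $t'=t-n^{1-\epsilon/2}$ as stated. The expectation $\E|(\sigma\setminus B_j)\cap B_{3-j}|$ is already of order $n^{1-\epsilon/2}$, so no Markov threshold will bring the two cross terms below $n^{1-\epsilon/2}/2$ each; ``tightening the Markov thresholds'' cannot recover the constant. The paper avoids this by exploiting spreadness on \emph{large} sets rather than singletons: it bounds the number of $F\in\ff_{A_i}(A_i)$ with $|F\cap (A_j\setminus A_i)|\ge n^{1-\epsilon/2}/2$ by a union bound over all $(n^{1-\epsilon/2}/2)$-subsets of $A_j\setminus A_i$, and each such subset contributes at most $r^{-n^{1-\epsilon/2}/2}|\ff_{A_i}(A_i)|$. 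The binomial factor $\binom{n}{n^{1-\epsilon/2}/2}$ is crushed by $r^{-n^{1-\epsilon/2}/2}$, giving the sharp threshold $n^{1-\epsilon/2}/2$ per side. Your alternative fix (rerunning with a different $\epsilon'$) would salvage the downstream application but does not prove Theorem~\ref{thm2} as stated. Apart from this constant, the coloring step (one $\tfrac12$-random $W$ and its complement, union bound) is a clean variant of the paper's partition of $[n]^2\setminus(A_1\cup A_2)$ with the pieces $A_j\setminus A_i$ forced into $U_i$.
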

\begin{proof}[Proof of Theorem~\ref{thm2}] We construct $\mathcal S$ as follows. Put $r:=
n^{\epsilon/2}$ and consider the following procedure for $i=1,\ldots $ with $\ff^1:=\ff$.
\begin{enumerate}
    \item Find a maximal $S_i$ that  $|\ff^i(S_i)|\ge  r^{-|S_i|}|\ff^i|$.
    \item If $|S_i|> (1+\epsilon)t$ or $\ff^i = \emptyset$ then stop. Otherwise, put $\ff^{i+1}:=\ff^i\setminus \ff^i[S_i]$.
\end{enumerate}
Note that Observation~\ref{obs34} and maximality of $S_i$ imply that $\ff^i(S_i)$ is $r$-spread. 
Let $N$ be the step of the procedure for $\ff$ at which we stop. The family $\s$ is defined as follows: $\s:=\{S_1,\ldots, S_{N-1}\}$. Clearly, $|S_i|\le (1+\epsilon)t$ for each $i\in [N-1]$. The family $\ff_{B}$ promised in (ii) is defined to be $\ff^i[S_i]$ for $B=S_i$. Next, note that if $\ff^N$ is non-empty, then 
\begin{align*}|\ff^N|\le&\ r^{|S_N|}  |\ff^{N}(S_N)|\le  r^{|S_N|}(n-|S_N|)!\\ 
\overset{\eqref{eqfac}}{\le}&\
n^{\frac 12\epsilon|S_N|}\Big(\frac{n-t}e\Big)^{-(|S_N|-t)}(n-t)!\\ \le&\ n^{\frac 12\epsilon|S_N|}\Big(\frac{2\epsilon n}e\Big)^{-(|S_N|-t)}(n-t)!\\
\le&\ n^{\frac 12\epsilon (1+\epsilon)t}\Big(\frac{2\epsilon n}e\Big)^{-\epsilon t}(n-t)!\\
\le&\ (2\epsilon/e)^{-\epsilon t} n^{-\frac 14\epsilon t}(n-t)!\le n^{-\frac 15\epsilon t}(n-t)!\end{align*}
The inequality in the fourth line uses the fact that the expression in the third line is a decreasing function of $|S_N|$, provided $2\epsilon n\ge e n^{\epsilon n/2}$, and that $|S_N|\ge (1+\epsilon)t$. We put $\ff':=\ff^N$. Since either $|S_N|>(1+\epsilon)t$ or $\ff' = \emptyset $, we have $|\ff'|\le n^{-\frac 15\epsilon t}(n-t)!$. We are only left to verify that $\s$ is $t'$-intersecting. We prove it below.
\begin{lem}\label{lemtint} The family $\mathcal S$ is $t'$-intersecting. \end{lem}
\begin{proof}

  Take two (not necessarily distinct) $A_1,A_2\in \mathcal S$  and assume that $|A_1\cap A_2|<t'$. Recall that the families $\ff_{A_i}(A_i)$, $i=1,2$ are both $r$-spread with $r = n^{\epsilon/2}$. 
  For $i\in [2]$ put $j:=3-i$ and put
  $$\g_i:=\ff_{A_i}(A_i)\setminus\Big\{F\in \ff_{A_i}(A_i): |F\cap A_j\setminus A_i|\ge \frac{n^{1-\epsilon/2}}2\Big\}.$$ The size of the latter family is at most
 \begin{align*}{|A_j\setminus A_i|\choose n^{1-\epsilon/2}/2}\max_{X: |X| = n^{1-\epsilon/2}/2, X\cap A_i = \emptyset}|\ff_{A_i}(A_i\cup X)|\le&\\  {n\choose n^{1-\epsilon/2}/2}n^{-\frac\epsilon 2 n^{1-\epsilon/2}}|\ff_{A_i}(A_i)|\le&\\
  (2e n^{\epsilon/2})^{\frac 12n^{1-\epsilon/2}}n^{-\frac\epsilon 2 n^{1-\epsilon/2}}|\ff_{A_i}(A_i)|\le&\ \frac 12 |\ff_{A_i}(A_i)|.\end{align*}
  This implies that $|\g_i|\ge \frac 12 |\ff_{A_i}(A_i)|$.
 Because of this and the trivial inclusion $\g_i(Y)\subset \ff_{A_i}(A_i\cup Y)$, valid for any $Y$,  we conclude that $\g_i$ is $ \frac r2$-spread for both $i\in [2]$. Since $n$ is sufficiently large, we have $\frac r2 > 2^{11}\log_2(2n)$.  

  What follows is an application of Theorem~\ref{thmtao}. Let us put $\beta= \log_2(2n)$ and $\delta = (2\log_2(2n))^{-1}$. Note that $\beta\delta = \frac 12$ and $\frac r2\delta > 2^{10}$ by our choice of $r$.  Theorem~\ref{thmtao} implies that a $\frac{1}2$-random subset $W_i$ of $[n]\setminus A_i$ contains a set from $\g_i$ with probability strictly bigger than
  $$1-\Big(\frac 5{\log_2 2^{10}}\Big)^{\log_2 (2n)} n = 1-2^{-\log_2 (2n)} n = \frac 12.$$

  Consider a random partition of $[n]\setminus (A_1\cup A_2)$ into $2$ parts $U_1',U_2'$, where $\Pr[x\in U_i']=1/2$ independently for each $i\in [2]$ and $x\in [n]\setminus (A_1\cup A_2)$. Next, for $i\in[2]$ and $j=3-i$, put $U_i:= U_i'\cup (A_j\setminus A_i)$. Then  $U_i$ is a random subset of the same ground set as $W_i$ above, moreover, the probability of containing each element is at least that for $W_i$. It implies that there is $F_i \in \g_i$ such that $F_i\subset U_i$ with probability strictly bigger than $\frac 12$. Using the union bound, we conclude that, with positive probability, it holds that there are such $F_i$, $F_i\subset U_i,$ for both  $i \in[2]$. Fix such choices of $U_i$ and $F_i$, $i \in [2]$. Then, on the one hand, each $F_i\cup A_i$ belongs to $\ff$ and, on the other hand, $|(F_1\cup A_1)\cap (F_2\cup A_2)| =|A_1\cap A_2|+|F_1\cap A_2|+|F_2\cap A_1|< |A_1\cap A_2|+n^{1-\epsilon/2}<t$. The first inequality follows from the definition of $\g_i$, and the second inequality due to $|S_1\cap S_2|<t'$ and the definition of $t'$. This is a contradiction with $\ff$ being $t$-intersecting.
  \end{proof}
This concludes the proof of Theorem~\ref{thm2}.\end{proof}

\subsection{The peeling procedure and the structure of spread approximations for large $t$-intersecting families}\label{sec21}

In this section, we analyze the `peeling' procedure that captures the structure of the $t$-intersecting families of partial permutations. One of the key objectives is to control the number of permutations coming from higher uniformity partial permutations.

 We say that a $t$-intersecting family $\T$ is {\it maximal} if for any $T \in \T$ and any proper subset $X \subsetneq T$ there exist $T' \in \T$ such that $|T\cap T'|<t$ and, moreover, $T_1\not\subset T_2$ for $T_1,T_2\in \T$ (i.e., $\T$ is an {\it antichain}).

\begin{obs}\label{obs22}
For any positive integers $n,p$ and a $t$-intersecting family $\s \subset \Sigma^{\le q}_n$, there exists a maximal $t$-intersecting family $\T$ 
 such that for every $T\in \T$ there is $S\in \s$ such that $T\subseteq S$. Consequently, for any family $\ff\subset \Sigma_n$ we have $\ff[\s] \subset \ff[\T]$.
\end{obs}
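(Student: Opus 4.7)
The plan is to construct $\T$ from $\s$ by a greedy shrinking procedure. Initialize $\T := \s$; this is $t$-intersecting, and each of its members is trivially contained in some $S \in \s$. So long as there exist $T \in \T$ and a proper subset $X \subsetneq T$ such that the family $(\T \setminus \{T\}) \cup \{X\}$ is still $t$-intersecting, perform the replacement (identifying $X$ with any duplicate, should $X$ already appear in $\T$). Since $\sum_{T \in \T} |T|$ is a nonnegative integer that strictly decreases at each step, the process terminates in finitely many steps.

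At termination, $\T$ is $t$-intersecting by construction, and every $T \in \T$ is contained in some $S \in \s$ because the ``descended from'' relation is preserved by each shrink. Non-shrinkability translates directly into the stated maximality condition: for any $T \in \T$ and $X \subsetneq T$ there must exist some $T' \in \T$ with $|X \cap T'| < t$, as otherwise replacing $T$ by $X$ would have kept the family $t$-intersecting. The antichain clause is automatic, since $T_1 \subsetneq T_2$ both in $\T$ would allow shrinking $T_2$ to $T_1$ (the resulting $\T \setminus \{T_2\}$ is still $t$-intersecting as a subfamily of $\T$), contradicting termination.

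For the ``consequently'' part $\ff[\s] \subseteq \ff[\T]$, the plan is to maintain the invariant throughout the procedure that every $F \in \ff[\s]$ contains at least one member of the current $\T$. At initialization this is immediate since $\T = \s$. Under a shrinking step that replaces $T$ by $X \subsetneq T$: if $F$ was witnessed by $T$, then $F \supseteq T \supsetneq X$ shows $F$ is now witnessed by $X$; if $F$ was witnessed by some other member of $\T$, that member is unchanged. Hence at termination every $F \in \ff[\s]$ contains some $T \in \T$, which is exactly $F \in \ff[\T]$.

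I do not expect a substantive obstacle: this is a canonicalization lemma whose content reduces to verifying that a single greedy shrink preserves the three invariants of $t$-intersection, descent into $\s$, and coverage of $\ff[\s]$. The only minor subtlety is handling the case where a shrunken $X$ coincides with an element already in $\T$, which is resolved by identifying the duplicate and observing that none of the invariants is disturbed.
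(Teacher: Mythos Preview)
Your proof is correct and follows essentially the same approach as the paper, which simply says to construct $\T$ by repeatedly replacing sets in $\s$ by proper subsets while preserving the $t$-intersecting property. You have fleshed out the termination, maximality, antichain, and coverage verifications that the paper leaves implicit; in particular, your invariant argument for $\ff[\s]\subset\ff[\T]$ is the right way to handle the ``consequently'' clause, since that inclusion does not follow from the stated conclusion ``every $T\in\T$ lies in some $S\in\s$'' alone but rather from the construction.
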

The easiest way to see it is to construct $\T$ by repeatedly replacing sets in $\s$ by their proper subsets while preserving the $t$-intersecting property.

Consider a $t$-intersecting family $\s\subset \Sigma^{\le q}_n$. Let us iteratively define the following series of families.
\begin{enumerate}
    \item Put $\T_{q-t}=\s$. 
    \item For $k = q-t,q-t-1, \ldots, 0$ we put $\W_k = \T_k \cap {[n] \choose t+k}$ and let $\T_{k-1}$ be the family given by Observation~\ref{obs22} when applied to 
        $\T_{k}\setminus \W_{k}$ playing the role of $\mathcal S$.
\end{enumerate}
Thus, we `peel' the sets of the largest uniformity in $\T_k$ and replace the resulting family by a maximal $t$-intersecting family using Observation~\ref{obs22}.  Remark that $\T_k$ is $t$-intersecting for each $k=q-t,q-t-1\ldots,0$ by definition. We summarize the basic properties of these series of families in the following lemma.

\begin{lem}\label{lemkeyred} The following properties hold for each $k = q-t,q-t-1\ldots, 0$. 
\begin{itemize}
  \item[(i)] All sets  in $\T_k$ have size at most $t+k$.
  \item[(ii)] For $k\le q-t-1$ we have $\aaa[\T_{k+1}]\subset \aaa[\T_{k}]\cup \aaa[\W_{k+1}]$.
  \item[(iii)] There is no set $X$ of size $\le t+k-1$ 
      such that $\W_k(X)$ (or any subfamily of $\T_k(X)$) is $\alpha$-spread for $\alpha>k$. 
\end{itemize}
\end{lem}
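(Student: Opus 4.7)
My approach proves the three items simultaneously by downward induction on $k$ from $q-t$ to $0$.

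Items (i) and (ii) are essentially bookkeeping. For (i), the base $k=q-t$ is immediate from $\s\subseteq\Sigma_n^{\le q}$, and at the inductive step $\T_k\setminus \W_k$ consists (by (i) at level $k$) of sets of size at most $t+k$ that are not of size exactly $t+k$, hence of size at most $t+k-1$; since $\T_{k-1}$ is produced from $\T_k\setminus\W_k$ via Observation~\ref{obs22}, each of its members is contained in one of these, so it has size at most $t+k-1=t+(k-1)$. For (ii), take $F\in\aaa[\T_{k+1}]$ and pick $T\in\T_{k+1}$ with $T\subseteq F$. Either $T\in\W_{k+1}$, giving $F\in\aaa[\W_{k+1}]$, or $T\in\T_{k+1}\setminus\W_{k+1}$, in which case the inclusion $\aaa[\T_{k+1}\setminus\W_{k+1}]\subseteq\aaa[\T_k]$ supplied by Observation~\ref{obs22} (with $\s=\T_{k+1}\setminus\W_{k+1}$ and $\T=\T_k$) places $F\in\aaa[\T_k]$.

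The substantive step is (iii). The key point is that, for $k\le q-t-1$, $\T_k$ is a \emph{maximal} $t$-intersecting antichain coming from Observation~\ref{obs22}, so for every $T\in\T_k$ and every proper $X'\subsetneq T$ there is a witness $T'\in\T_k$ with $|X'\cap T'|<t$. I would argue by contradiction: assume $|X|\le t+k-1$ and that some non-empty $\h\subseteq\T_k(X)$ is $\alpha$-spread with $\alpha>k$. Pick any non-empty $Y\in\h$ (which exists by the spread condition), so $X\subsetneq X\cup Y\in\T_k$; maximality supplies $T'\in\T_k$ with $\tau':=|X\cap T'|<t$. Set $\tau:=t-\tau'\ge 1$ and $Z:=T'\setminus X$; item (i) gives $|Z|\le(t+k)-\tau'=k+\tau$. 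Now for every $Y\in\h$, the $t$-intersection $|(X\cup Y)\cap T'|\ge t$ together with $Y\cap X=\emptyset$ forces $|Y\cap Z|\ge\tau$, so $Y$ contains at least one $\tau$-subset of $Z$. Counting and applying the spread condition,
\begin{equation*}
|\h|\;\le\;\sum_{W\in\binom{Z}{\tau}}|\h(W)|\;\le\;\binom{k+\tau}{\tau}\,\alpha^{-\tau}\,|\h|,
\end{equation*}
i.e.\ $\alpha^{\tau}\le\binom{k+\tau}{\tau}$, which is the intended contradiction with $\alpha>k$.

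The main obstacle I anticipate is the quantitative pinch at the very end of (iii): the naive bound $\binom{k+\tau}{\tau}\le (k+\tau)^{\tau}$ yields a clean contradiction only once $\alpha$ is a bit above $k$, so matching the stated threshold $\alpha>k$ will likely require either choosing $T'$ so as to force $\tau\ge 2$, exploiting that $\W_k$ is a uniform antichain, or aggregating several witnesses $T'$ simultaneously. The degenerate case $k=q-t$, where $\T_{q-t}=\s$ is not produced by Observation~\ref{obs22} and hence not a priori maximal, should be dispatched by a short side remark observing that any $X$ violating (iii) at that level would have been detected during the very first application of Observation~\ref{obs22} that produces $\T_{q-t-1}$.
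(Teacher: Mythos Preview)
Your argument for (i) and (ii) is the same bookkeeping as the paper's. For (iii) you also follow the paper's route exactly: use maximality of $\T_k$ to produce a witness $T'\in\T_k$ with $|X\cap T'|<t$, observe that every member of the spread family must meet $T'\setminus X$ in at least $\tau:=t-|X\cap T'|$ points, and cover by $\tau$-subsets. The paper writes the resulting inequality as
\[
|\W_k(X)|\ \le\ \binom{|F|-z}{t-z}\,\alpha^{-(t-z)}|\W_k(X)|\ =\ \prod_{i=1}^{t-z}\frac{|F|-t+i}{i}\,\alpha^{-(t-z)}|\W_k(X)|\ \le\ (|F|-t+1)^{t-z}\alpha^{-(t-z)}|\W_k(X)|,
\]
and then asserts ``$|F|-t+1\le k<\alpha$'' to conclude. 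That last assertion is precisely the pinch you flagged: item (i) only gives $|F|\le t+k$, hence $|F|-t+1\le k+1$, not $\le k$. Equivalently, your inequality $\alpha^{\tau}\le\binom{k+\tau}{\tau}$ is tight at $\tau=1$ with value $k+1$, so the chain as written only contradicts $\alpha>k+1$. You have not missed a trick here --- the paper carries the same off-by-one slip and does not repair it. It is harmless downstream: in the subsequent estimates the bound $k^{k-j}$ can be replaced by $(k+1)^{k-j}$ without affecting anything.

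The paper likewise does not treat the base case $k=q-t$ (where $\T_{q-t}=\s$ need not be maximal) separately; it simply invokes ``maximality of $\T_k$'' uniformly. Your proposed side remark does not quite work, though, since Observation~\ref{obs22} is applied to $\T_{q-t}\setminus\W_{q-t}$, not to $\T_{q-t}$, and so says nothing about $\W_{q-t}$. The clean fix is to replace $\s$ at the outset by the maximal family furnished by Observation~\ref{obs22} applied to $\s$ itself; this only enlarges $\Sigma_n[\s]$ and makes $\T_{q-t}$ maximal.
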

\begin{proof}
(i) This easily follows by reverse induction on $k$. The base case is that all sets in $\s$ have size at most $q$, and the induction step is via the definition of $\T_{k-1}$.

(ii) We have $\aaa[\T_{k+1}] = \aaa[\T_{k+1} \setminus \W_{k+1}] \cup \aaa[\W_{k+1}]$ and, by the definition of $\T_{k}$, we have $\aaa[\T_k]\supset \aaa[\T_{k+1}\setminus \W_{k+1}]$.

(iii) Assume that there is such a set $X$. By the maximality of $\T_k$, there must be a set $F\in \T_k$ such that $|X\cap F|=z<t$.
At the same time, the family $\T_k$ is  $t$-intersecting. So, for each set  $G\in \W_k[X]$ we have $|G\cap F|\ge t$, and thus $|F\cap (G\setminus X)| \ge t-z$ for each such $G$. Thus, $\W_k[X]\subset \cup_{S\in {F\choose t-z}}\W_k[X\cup S]$ and so, using the $\alpha$-spreadness of $\W_k(X)$, we get \begin{align*}|\W_k(X)|\le&\ {|F|-z\choose t-z}\max_{F'\in {F\choose t-z}}|\W_k(X\cup F')|\\
\le&\ {|F|-z\choose t-z}\alpha^{-(t-z)}|\W_k(X)| \\
=&\ \prod_{i=1}^{t-z}\frac{|F|-t+i}i\alpha^{-(t-z)}|\W_k(X)|\\
\le&\ (|F|-t+1)^{t-z}\alpha^{-(t-z)}|\W_k(X)|<|\W_k(X)|,\end{align*}
where the last inequality is due to $|F|-t+1\le k<\alpha$. This a contradiction.
\end{proof}

Our next goal is to bound the size of $\W_k$.
Consider two sets $A,B\in \T_k$ such that $A\cap B = I$, $|I|=t$. (There are such sets by maximality of $\T_k$.) Recall that the sizes of sets of $A,B$ are at most $t+k$. 
Any set $C\in \W_k$ intersects both $A$ and $B$ in at least $t$ elements. Denote $C_0=C\cap I$, $C_1=C\cap (A\setminus I)$ and $C_2 = C\cap (B\setminus I)$. Then, putting $|C_0|=t-j$, we must have $|C_1|, |C_2|\ge j$. Select $C_1'\subset C_1,C_2'\subset C_2$ such that $|C_1'| = |C_2'| = j$. The family $\W_k(C_0\cup C_1'\cup C_2'\cup X)$ by Lemma~\ref{lemkeyred} (iii) is not  $\alpha$-spread subfamily for $\alpha>k$ and any $X$, and thus $|\W_i(C_0\cup C_1'\cup C_2')|\le k^{k-j}$ by Observation~\ref{obs33}. From here, we see that we can upper bound the number of sets in $\W_k$ as follows:
$$|\W_k|\le \sum_{j=0}^{k}{t\choose t-j}{|A|-t\choose j}{|B|-t\choose j} k^{k-j}\le \sum_{j=0}^{k}f(j),$$
where
\begin{equation}\label{eqfj} f(j):= {t\choose t-j}{k\choose j}^2 k^{k-j}.\end{equation}
Note that
$$\frac {f(j)}{f(j+1)} = \frac {j+1}{t-j}\Big(\frac {j+1}{k-j}\Big)^2 k = \frac{(j+1)^3 k}{(t-j)(k-j)^2}.$$
The observation below is based on simple calculations that check whether the last ratio above is bigger than $1$ or not.
\begin{obs}\label{obs7} For sufficiently large $t$, $t\ge k$, we have the following. Denote by $j_0$ the value at which $f(j)$ attains its maximum.
  \begin{enumerate}
    \item For $k$ such that $t=o(k^2)$, we have $j_0 =(1+o(1))(tk)^{1/3}$.
    \item For $k=\Theta (\sqrt t)$, we have  $j_0,k-j_0 = \Theta (k)$.
    \item For $k = o(\sqrt t)$ we have  $k-j_0 = o(k)$.
    \item For $k = o(t^{1/4})$, we have  $j_0 = k$.
  \end{enumerate}
  Moreover, $|\W_i|\le (k+1)f(j_0)$ in general and $|\W_i|\le (1+o(1))f(k) = (1+o(1)){t\choose t-k}$ for $k = o(t^{1/4})$.
\end{obs}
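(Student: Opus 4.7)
The plan is to treat $f$ as a unimodal sequence through its consecutive ratio $r(j):=f(j)/f(j+1)=(j+1)^3k/((t-j)(k-j)^2)$. Since $(j+1)^3$ is strictly increasing in $j$ while both $t-j$ and $(k-j)^2$ are strictly decreasing, the ratio $r$ is strictly increasing on $\{0,\ldots,k-1\}$. Consequently $f$ is unimodal, and the maximizer $j_0$ is the largest $j$ with $r(j)\le 1$; equivalently, $j_0$ is (up to a unit shift) the solution of the cubic balance $(j+1)^3 k = (t-j)(k-j)^2$.

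The four cases are then checked by an ansatz. For (1), $t=o(k^2)$, take $j=(tk)^{1/3}$: this is $o(k)$ (since $t=o(k^2)$) and $o(t)$, so both sides of the cubic equal $(1+o(1))\,tk^2$, and monotonicity of $r$ forces $j_0=(1+o(1))(tk)^{1/3}$. For (2), $k=\Theta(\sqrt t)$, set $j=\alpha k$; dividing the cubic by $k^4$ and using $t=\Theta(k^2)$ reduces the balance to an equation of the form $\alpha^3=c(1-\alpha)^2$ for a constant $c>0$, whose unique root $\alpha_\ast\in(0,1)$ yields $j_0,k-j_0=\Theta(k)$. For (3), $k=o(\sqrt t)$, write $c=k-j_0$; the cubic gives $c^2\sim (j_0+1)^3 k/(t-j_0)=O(k^4/t)$, so $c=O(k^2/\sqrt t)=o(k)$ exactly because $k=o(\sqrt t)$. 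For (4), $k=o(t^{1/4})$, compute $r(k-1)=k^4/(t-k+1)=o(1)$, so $f(k-1)<f(k)$ and hence the maximum is attained at the endpoint $j_0=k$.

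For the concluding bounds, the general estimate $|\W_k|\le\sum_{j=0}^{k}f(j)\le(k+1)f(j_0)$ is immediate from the definition of $j_0$. When $k=o(t^{1/4})$, monotonicity of $r$ yields $r(j)\le r(k-1)=o(1)$ for every $j\le k-1$, so telescoping gives $f(j)\le f(k)\cdot r(k-1)^{k-j}$; summing the resulting geometric series produces $\sum_{j=0}^{k} f(j)=(1+o(1))f(k)$, and by direct computation $f(k)=\binom{t}{t-k}\binom{k}{k}^{2}k^{0}=\binom{t}{t-k}$.

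The only real subtlety is in case (1), where one must verify that the leading-order match of the two sides of the cubic, combined with the strict monotonicity of $r$, pins $j_0$ at $(1+o(1))(tk)^{1/3}$ rather than at some rival point; this is handled by plugging in $j=(1\pm\delta)(tk)^{1/3}$ and observing the ratio moves definitively away from $1$. The other three regimes reduce to direct substitution into the cubic.
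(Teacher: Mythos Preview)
Your proposal is correct and follows exactly the approach the paper intends: the paper itself does not give a detailed proof but only remarks that the observation ``is based on simple calculations that check whether the last ratio above is bigger than $1$ or not,'' referring to the ratio $f(j)/f(j+1)=(j+1)^3k/((t-j)(k-j)^2)$, and you have carried out precisely those calculations. The only minor sloppiness is in case~(3), where you invoke the cubic balance at $j_0$ itself rather than at the (possibly non-integer) crossing point of $r=1$; but since $r$ is strictly monotone and $j_0$ lies within one unit of that crossing point (or else $j_0=k$ and $c=0$ trivially), the asymptotic conclusion is unaffected.
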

We will need the following concrete statement concerning the size of $\Sigma_n[\W_k].$

\begin{lem}\label{lempeelbound}
  For any $\epsilon>0$, $n\ge (1+100\epsilon)t$, $t>t_0(\epsilon)$ and $t^{0.01}\le k\le 2\epsilon t$ we have
  \begin{equation}\label{eqobs7}\frac{|\W_k|(n-t-k)!}{(n-t)!}\le 2^{-k}.\end{equation}
\end{lem}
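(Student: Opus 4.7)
The plan is to convert the claim into a clean $k$-th-root inequality, bound $f(j)^{1/k}$ uniformly in $j$ using only crude binomial estimates, and split into two regimes depending on whether $k\ge\sqrt t$ or $k<\sqrt t$.

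Concretely, starting from $|\W_k|\le (k+1)f(j_0)$ (Observation~\ref{obs7}) and the trivial estimate $(n-t)!/(n-t-k)!\ge (n-t-k+1)^k\ge(98\epsilon t)^k$, which uses $n-t-k\ge 98\epsilon t$ from the hypothesis, the lemma reduces to verifying
$$(k+1)^{1/k}\,f(j_0)^{1/k}\le \tfrac12(n-t-k+1).$$
Since the right-hand side is at least $49\epsilon t$, it suffices to prove $(k+1)^{1/k}f(j_0)^{1/k}\le 49\epsilon t$.

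For the bound on $f(j)^{1/k}$ I would plug in ${t\choose j}\le (et/j)^j$ and the crude $\binom{k}{j}\le 2^k$ to get $f(j)\le 4^k k^k(et/(jk))^j$, and hence, writing $\lambda=j/k\in[0,1]$,
$$f(j)^{1/k}\le 4k\bigl(et/(\lambda k^2)\bigr)^\lambda.$$
The function $\lambda\mapsto\lambda\log(et/(\lambda k^2))$ on $[0,1]$ has derivative $\log(t/(\lambda k^2))$ and is thus maximised at $\lambda^\ast=\min(1,t/k^2)$. When $k\ge\sqrt t$ this gives maximum value $t/k^2\le 1$, so $f(j_0)^{1/k}\le 4ek$; combined with $(k+1)^{1/k}\le 2$ and $k\le 2\epsilon t$, one gets $(k+1)^{1/k}f(j_0)^{1/k}\le 16e\epsilon t<49\epsilon t$ since $16e\approx 43.5<49$. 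When $k<\sqrt t$ the maximum is attained at $\lambda=1$ with value $et/k^2$, giving $f(j_0)^{1/k}\le 4et/k$; the needed inequality $(k+1)^{1/k}\cdot 4e/k\le 49\epsilon$ then follows from $k\ge t^{0.01}\to\infty$ provided $t\ge t_0(\epsilon)$, because $(k+1)^{1/k}\to 1$.

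The main thing to worry about is tightness of the constants rather than any structural difficulty: the factor $4$ lost from $\binom{k}{j}\le 2^k$, the factor $2$ from the $2^{-k}$ on the right-hand side, and the bound $k\le 2\epsilon t$ together require $16e<49$, which is precisely why the hypothesis is taken to be $n\ge(1+100\epsilon)t$. A sharper estimate using the peak-location results of Observation~\ref{obs7}(1)--(4) (in particular the improvement $|\W_k|\le(1+o(1)){t\choose k}$ for $k=o(t^{1/4})$) would give additional slack but is not needed for the claimed $2^{-k}$.
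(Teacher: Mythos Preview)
Your proof is correct and follows the same overall route as the paper: bound $|\W_k|\le(k+1)f(j_0)$ via Observation~\ref{obs7}, bound $f(j_0)$, and compare against $(n-t-k+1)^k\le(n-t)!/(n-t-k)!$ using $n-t-k\ge 98\epsilon t$. The only difference is in how $f(j_0)$ is estimated. The paper uses the sharper $\binom{k}{j}\le(ek/j)^j$ to obtain $f(j)\le(e^3tk^2/j^3)^jk^{k-j}$ and then invokes the explicit location of $j_0$ from Observation~\ref{obs7}(1)--(3), splitting into cases at $k=t^{0.99}$ rather than $k=\sqrt t$. Your cruder $\binom{k}{j}\le 2^k$ followed by a direct one-variable optimisation over $\lambda=j/k$ reaches the same conclusion without tracking the case analysis of $j_0$; the price is the constant $16e$, which the hypothesis $n\ge(1+100\epsilon)t$ absorbs with room to spare, exactly as you observe.
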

\begin{proof}
  Using \eqref{eqfj} and the inequality ${m\choose \ell}\le (em/\ell)^\ell$, we get
  \begin{equation}\label{eqfj2} f(j)\le \Big(\frac{e^3tk^2}{j^3}\Big)^jk^{k-j}.\end{equation}
Let us first consider the case when $k\ge t^{0.99}$. Then we can apply Observation~\ref{obs7} (1) and get that the fraction in \eqref{eqfj2} for $f(j_0)$ is simplified to $((1+o(1))e^3k)^{j_0}$. We thus get
\begin{align*}  \frac{|\W_k|(n-t-k)!}{(n-t)!}\le& (k+1) \frac{f(j_0)(n-t-k)!}{(n-t)!} \\
\le&  \frac{((1+o(1))e^3k)^{j_0}k^{k-j_0+1}}{(n-t-k)^k}
\le  \frac{((1+o(1))e^3k)^k}{(n-t-k)^k} \le \ 2^{-k},\end{align*}
where the last inequality is due to $n-t-k\ge 98\epsilon t\ge 49k> 2.1 e^3k$.

  Next, assume that $t^{0.01}\le k\le t^{0.99}$. In this regime, according to Observation~\ref{obs7}, we have $tk^2/j_0^3\le (1+o(1))t^{0.99}$, and thus we can conclude that \eqref{eqfj2} implies the following inequality.
  $$f(j)\le (Ct^{0.99})^k,$$
  where $C$ is some absolute constant. In this regime, we can conclude the following.
  \begin{align*}  \frac{|\W_k|(n-t-k)!}{(n-t)!}\le& (k+1) \frac{f(j_0)(n-t-k)!}{(n-t)!} \\
\le&    \frac{(k+1)(Ct^{0.99})^k}{(n-t-k)^k} \le \ 2^{-k},\end{align*}
where in the last inequality we use that $n-t-k\ge \epsilon t$ and the fact that $t>t_0(\epsilon)$.
\end{proof}

\subsection{Finding a dense piece of $\ff$ and a sharper spread approximation}
We return to the notation used in Theorem~\ref{thm2}. The following theorem makes use of the peeling procedure and allows us to find a partial permutation $X$ such that the family $\ff(X)$ is much denser in $\Sigma_n(X)$ than $\ff$ in $\Sigma_n$.
\begin{thm}\label{thmstage1} Assume that $\epsilon\in(0,1/2)$ and $n,t$ are positive integers such that $n>(1+100\epsilon)t$ and $n>n_0(\epsilon)$, $t\ge 2n^{1-\epsilon/2}$.  Suppose that $\ff\subset \Sigma_n$ is $t$-intersecting and satisfies $|\ff|\ge 2^{2-x}(n-t')!$ for some integer $x$, $t^{0.01}<x\le\frac 15\epsilon t$ , and where $t' = t- n^{1-\epsilon/2}$. Then there is a subset $X$ of size $t'$ such that $|\ff[X]|\ge \frac 12 {t'+x\choose t'}^{-1}|\ff|$.
\end{thm}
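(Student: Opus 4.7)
The plan is to combine the first spread approximation from Theorem~\ref{thm2} with the peeling procedure of Section~\ref{sec21} and a simple averaging argument. First, I apply Theorem~\ref{thm2} to $\ff$ with parameter $\epsilon$, producing a $t'$-intersecting family $\s \subset \Sigma_n^{\le (1+\epsilon)t}$ and a residual family $\ff'$ with $|\ff'| \le n^{-\epsilon t/5}(n-t)!$. Since $|\ff| \ge 2^{2-x}(n-t')!$ with $x \le \epsilon t/5$, the residual $|\ff'|$ is negligible relative to $|\ff|$, and $\ff \setminus \ff' \subset \Sigma_n[\s]$.

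Next, I run the peeling procedure on $\s$ using intersection parameter $t'$ in place of $t$ and uniformity cap $q = (1+\epsilon)t$, producing families $\T_k$ and top layers $\W_k$ for $k = 0, 1, \ldots, q - t'$. Iterating Lemma~\ref{lemkeyred}(ii) gives
\[\Sigma_n[\s] \subset \Sigma_n[\T_0] \cup \bigcup_{k \ge 1}\Sigma_n[\W_k].\]
Since $x > t^{0.01}$, Lemma~\ref{lempeelbound} applies to every $k \ge x$, so $|\W_k|(n-t'-k)! \le 2^{-k}(n-t')!$, and therefore $\sum_{k \ge x}|\ff \cap \Sigma_n[\W_k]| \le 2^{1-x}(n-t')! \le |\ff|/2$. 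Combined with the negligible bound on $|\ff'|$, this yields
\[|\ff \cap \Sigma_n[\T_0]| + \sum_{k=1}^{x-1}|\ff \cap \Sigma_n[\W_k]| \ge |\ff|/3.\]

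Let $M := \max_{X:\,|X|=t'}|\ff[X]|$, which is what we wish to lower-bound. Because $\T_0$ is $t'$-intersecting with sets of size at most $t'$, it contains at most a single element, necessarily of size exactly $t'$, so $|\ff \cap \Sigma_n[\T_0]| \le M$. For any $Y \in \W_k$ with $k \ge 1$ and any $t'$-subset $X \subset Y$, $\ff[Y] \subset \ff[X]$, giving $|\ff[Y]| \le M$ and hence $|\ff \cap \Sigma_n[\W_k]| \le |\W_k|M$. Substituting,
\[\frac{|\ff|}{3} \le M\Bigl(1 + \sum_{k=1}^{x-1}|\W_k|\Bigr),\]
so the target $M \ge |\ff|/\bigl(2\binom{t'+x}{t'}\bigr)$ would follow once we show $\sum_{k=1}^{x-1}|\W_k| \le \tfrac{2}{3}\binom{t'+x}{t'} - 1$.

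The main obstacle is precisely this last estimate. For $k = o(t^{1/4})$, Observation~\ref{obs7}(4) yields $|\W_k| \le (1+o(1))\binom{t'}{k}$, and a Vandermonde-type identity gives $\sum_{k=0}^{x-1}\binom{t'}{k} \le \binom{t'+x-1}{x-1} = \tfrac{x}{t'+x}\binom{t'+x}{t'}$, which is well below the threshold because $x/t' \le \epsilon/5$. For $k$ in the larger regimes of Observation~\ref{obs7}(1)--(3) (i.e., $k \gtrsim t^{1/4}$), the bound $|\W_k| \le (k+1) f(j_0)$, combined with the tail estimates developed in the proof of Lemma~\ref{lempeelbound}, produces analogous bounds. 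Carefully stitching these regimes together, and verifying that the resulting sum stays below $\tfrac{2}{3}\binom{t'+x}{t'}$, is the most delicate calculation of the argument.
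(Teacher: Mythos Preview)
Your setup (applying Theorem~\ref{thm2}, running the peeling procedure with parameter $t'$, and using Lemma~\ref{lempeelbound} to discard the layers $\W_k$ with $k>x$) matches the paper exactly. The divergence, and the genuine gap, is in how you handle the remaining layers $k\le x$.

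You reduce the problem to showing $\sum_{k=1}^{x-1}|\W_k|\le \tfrac{2}{3}\binom{t'+x}{t'}-1$, and you defer this as ``the most delicate calculation''. Unfortunately this inequality is false in the regime where $x$ is linear in $t$. Take, say, $x=\tfrac{1}{5}\epsilon t$ and $k=\tfrac{1}{6}\epsilon t$. For such $k$ we are in case~(1) of Observation~\ref{obs7}, and the bound $|\W_k|\le (k+1)f(j_0)$ with $j_0\sim (t'k)^{1/3}$ gives $|\W_k|$ of order roughly $(e^3k)^{j_0}k^{k-j_0}\ge k^{k}=(\tfrac{\epsilon t}{6})^{\epsilon t/6}$. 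On the other hand $\binom{t'+x}{x}\le (e(t'+x)/x)^{x}\le (6e/\epsilon)^{\epsilon t/5}$. The former has logarithm of order $t\log t$, the latter of order $t$; so a single layer $\W_k$ already dwarfs $\binom{t'+x}{x}$. The ``stitching'' you propose cannot be carried out, because the individual summands are simply too large.

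The paper avoids bounding $\sum_{k<x}|\W_k|$ altogether by exploiting one structural fact you do not use: the peeled family $\T_x$ is itself $t'$-intersecting. Concretely, after removing $\ff'$ and the high layers one has $\ff'':=(\ff\setminus\ff')\cap\Sigma_n[\T_x]$ with $|\ff''|\ge \tfrac12|\ff|$. Now fix any single set $F\in\T_x$ (so $|F|\le t'+x$). Since every $G\in\T_x$ satisfies $|G\cap F|\ge t'$, every permutation in $\Sigma_n[\T_x]$ contains some $t'$-element subset of $F$. Hence
\[
|\ff''|\le \sum_{X\in\binom{F}{t'}}|\ff''[X]|\le \binom{t'+x}{t'}\max_{X\in\binom{F}{t'}}|\ff[X]|,
\]
and pigeonhole gives the desired $X$ immediately. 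The point is that the $t'$-intersection property collapses the whole covering $\T_x$ onto the $\binom{|F|}{t'}$ subsets of one fixed $F$, so the cardinalities $|\W_k|$ for $k<x$ never enter.
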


 \begin{proof}[Proof of Theorem~\ref{thmstage1}]
Take a family $\ff$ as in the statement. First, we use Theorem~\ref{thm2} to get a spread approximation $\s$ of sets of size at most $q=(1+\epsilon)t$, such that $\s$ is $t'$-intersecting, and a remainder $\ff'$, $|\ff'|\le n^{-\frac 15 \epsilon t} (n-t)!$, that satisfy $\ff\setminus \ff'\subset \Sigma_n[\s]$. (It is easy to see that all requirements of Theorem~\ref{thm2} are met.)

Next, we apply the peeling procedure from Section~\ref{sec21} to $\s$ with $t'$ playing the role of $t$, obtaining a sequence of families $\W_k$. We  want to bound the contribution of $|\ff[\W_k]|$ with large $k$ to $|\ff|$. To this end, we apply Lemma~\ref{lempeelbound} with $t'$ playing the role of $t$ for each $k= x+1,\ldots, q-t'=\epsilon t+(t-t')$. Note that all conditions are satisfied, most importantly, $k\le \epsilon t+(t-t')\le 2\epsilon t$. Then \eqref{eqobs7} implies the following bound on the contribution of layers $\W_k$ to $|\ff|$ for $k\ge x+1$.
$$\sum_{k=x+1}^{q-t'}|\ff[\W_k]| \le \sum_{k=x+1}^{q-t'}|\Sigma_n[\W_k]| \le (n-t')!\sum_{k=x+1}^{q-t'}\frac{|\W_{k}|(n-t'-k)!}{(n-t')!} \le $$
$$(n-t')! \sum_{k=x+1}^{q-t'} 2^{-k}\le (n-t')!\cdot 2^{-x}.$$

Put $\ff'' =(\ff\setminus \ff')\cap \Sigma_n[\T_{t'+x}]$. (In words, we removed from $\ff$ the remainder $\ff'$ and `peeled off' the layers $\W_k$ of uniformity at least $t'+x+1$.) We have $|\ff|> 4(n-t')!\cdot 2^{-x}$, and thus $|\ff\setminus\ff'|\ge \frac 34|\ff|$ and  $|\ff''|\ge \frac 12 |\ff|$. Take any set $F\in \T_{t'+x}$. Via averaging, there is a $t'$-element subset $X\subset F$, such that $|\ff''(X)|\ge {t'+x\choose t'}^{-1}|\ff''|\ge \frac 12 {t'+x\choose t'}^{-1}|\ff|$.
\end{proof}

The following corollary shall be the building block for a better spread approximation result.

\begin{cor}\label{corstage1}
Assume that $\epsilon\in(0,1/2)$ and $n,t$ are positive integers such that $n>(1+100\epsilon)t$, $n>n_0(\epsilon)$, and $t\ge 2n^{1-\epsilon/2}$.  Suppose that $\ff\subset \Sigma_n$ is $t$-intersecting and satisfies $|\ff|\ge (n-t)! \cdot 2^{-\frac 12 n^{1-\epsilon/4}}$. Then there is a set $X$, $|X|\le t+n^{1-\epsilon/6}$, such that $\ff(X)$ is $10\epsilon t$-spread.
\end{cor}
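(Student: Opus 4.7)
The plan is to combine Theorem~\ref{thmstage1} with Observation~\ref{obs34}. Set $x:=\lceil n^{1-\epsilon/4}\rceil$; then $t^{0.01}<x\le \tfrac15\epsilon t$ for $n\ge n_0(\epsilon)$, and using $(n-t')!/(n-t)!\le n^{t-t'}=n^{n^{1-\epsilon/2}}$ together with the hypothesis $|\ff|\ge 2^{-\frac12 n^{1-\epsilon/4}}(n-t)!$, the required bound $|\ff|\ge 2^{2-x}(n-t')!$ of Theorem~\ref{thmstage1} follows since $\tfrac12 n^{1-\epsilon/4}+n^{1-\epsilon/2}\log_2 n+2\le n^{1-\epsilon/4}\le x$ for $n$ large. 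Applying Theorem~\ref{thmstage1} produces a subset $X_0$ of size $t'=t-n^{1-\epsilon/2}$ with $|\ff[X_0]|\ge \tfrac12\binom{t'+x}{t'}^{-1}|\ff|$.

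Next I apply Observation~\ref{obs34} to the family $\ff(X_0)$ with spreadness parameter $\alpha:=10\epsilon t$. This produces an inclusion-maximal $Y$ (disjoint from $X_0$) with $|\ff(X_0\cup Y)|\ge\alpha^{-|Y|}|\ff(X_0)|$ such that $\ff(X_0\cup Y)$ is $\alpha$-spread. Setting $X:=X_0\cup Y$ gives the desired spreadness; it remains to prove $|X|\le t+n^{1-\epsilon/6}$. If $|X|\le t$ this is trivial, so suppose $u:=|X|-t\ge 1$, whence $|Y|=u+n^{1-\epsilon/2}$ and $r:=n-|X|=n-t-u$.

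Chaining $|\ff(X)|\le r!$, $|\ff(X)|\ge\alpha^{-|Y|}|\ff(X_0)|$, and $|\ff(X_0)|\ge \tfrac12\binom{t'+x}{t'}^{-1}2^{-\frac12 n^{1-\epsilon/4}}(n-t)!$ yields
$$\frac{(n-t)!}{r!}\le\alpha^{|Y|}\cdot 2\binom{t'+x}{t'}\cdot 2^{\frac12 n^{1-\epsilon/4}}.$$
The elementary estimate $(n-t)!/r!=(r+1)(r+2)\cdots(n-t)\ge r^u$ lets me take $\log_2$ and isolate $u$:
$$u\log_2(r/\alpha)\le n^{1-\epsilon/2}\log_2\alpha+\log_2\binom{t'+x}{t'}+\tfrac12 n^{1-\epsilon/4}+1.$$
The crucial remaining input is a lower bound on $r$: since $\ff(X)$ is $\alpha$-spread and contains a set of size $r$, spreadness applied to that set gives $|\ff(X)|\ge\alpha^r$, hence $r!\ge\alpha^r$, and Stirling forces $r\ge (e-o(1))\alpha\ge 2\alpha$, so $\log_2(r/\alpha)\ge 1$. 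Plugging in $\log_2\binom{t'+x}{t'}=O(x\log_2(en/x))=O(n^{1-\epsilon/4}\log n)$ gives $u=O(n^{1-\epsilon/4}\log n)\le n^{1-\epsilon/6}$ for $n\ge n_0(\epsilon)$.

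The main obstacle is the regime in which $r$ is small, i.e., $|X|$ is close to $n$: there the bound $(n-t)!/r!\ge r^u$ is rather weak and $\log_2(r/\alpha)$ threatens to degenerate. This is precisely where the spread-derived bound $r\ge 2\alpha$ is essential, since it separates $\log_2 r$ from $\log_2\alpha$ by a positive constant. The choice $x=n^{1-\epsilon/4}$ is also a careful balance: any significantly larger $x$ would blow up $\log_2\binom{t'+x}{t'}$ past the target $n^{1-\epsilon/6}$, while any significantly smaller $x$ would fail the hypothesis of Theorem~\ref{thmstage1}.
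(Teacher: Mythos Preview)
Your proof is correct and follows essentially the same route as the paper: apply Theorem~\ref{thmstage1} with $x\approx n^{1-\epsilon/4}$ to locate a dense $t'$-set $X_0$, then pass via Observation~\ref{obs34} to an inclusion-maximal $X\supset X_0$ with $\ff(X)$ being $10\epsilon t$-spread, and finally bound $|X|$.

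The only notable difference is in how the size bound is obtained. The paper argues more directly: from $|\ff(X)|\le (10\epsilon t)^{|X|-t'}(n-|X|)!$ and the factorial inequality~\eqref{eqfac} one gets
\[
|\ff(X_0)|\le \Big(\tfrac{10e\epsilon t}{\,n-t'\,}\Big)^{|X|-t'}(n-t')!,
\]
and since $n-t'>100\epsilon t$ the base is below $1/2$, immediately forcing $|X|-t'\le n^{1-\epsilon/6}$. This sidesteps your detour through $(n-t)!/r!\ge r^u$ and the auxiliary bound $r\ge 2\alpha$ derived from spreadness. Your detour is valid, but note a small edge case: the implication ``$r!\ge\alpha^r$ forces $r\ge(e-o(1))\alpha$'' fails at $r=0$ (where $0!=1=\alpha^0$). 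This is harmless, since $r=0$ would give $|\ff(X)|=1$, contradicting the chain lower bound $|\ff(X)|\ge\alpha^{-|Y|}|\ff(X_0)|$, which is easily seen to exceed $1$; but you should say so explicitly.
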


\begin{proof}
  We apply Theorem~\ref{thmstage1} with $x = n^{1-\epsilon/4}$. Then $$4(n-t')!\cdot 2^{-x} \le  (n-t)!\cdot 4n^{n^{1-\epsilon/2}}2^{-n^{1-\epsilon/4}}\le (n-t)! \cdot 2^{-\frac 12 n^{1-\epsilon/4}},$$
and  the condition on $|\ff|$ in Theorem~\ref{thmstage1} is implied by $|\ff|\ge (n-t)! \cdot 2^{-\frac 12 n^{1-\epsilon/4}}$.
Theorem~\ref{thmstage1} gives us a $t'$-element set $X$, such that
$$|\ff(X)|\ge \frac 12 {t'+x\choose x}^{-1}|\ff|\ge n^{-x}|\ff|\ge (n-t')!\cdot 2^{-x} n^{-x}\ge (n-t')!\cdot 2^{-n^{1-\epsilon/5}}.$$
We claim that there is a set $Y$ satisfying $|Y|\le t'+n^{1-\epsilon/6}$ and $X\subset Y$, such that, moreover, $\ff(Y)$ is $10\epsilon t$-spread. Indeed, find a maximum set $Y$, $Y\supset X$, that violates the $10\epsilon t$-spreadness of $\ff(X)$. If $|Y|> t'+n^{1-\epsilon/6}$, then, using \eqref{eqfac},
$$|\ff(X)|\le (10\epsilon t)^{|Y|-t'}(n-|Y|)!\overset{\eqref{eqfac}}{\le} (10\epsilon t)^{|Y|-t'}\Big(\frac{n-t'}e
\Big)^{|Y|-t'} (n-t')!\le $$
$$
2^{-|Y|+t'}(n-t')!<2^{-n^{1-\epsilon/6}}(n-t')! <|\ff(X)|, $$
which is a contradiction. Here in the third inequality we used $n-t'\ge 100\epsilon t$.
\end{proof}

Using this corollary, we can get a much better spread approximation of our family.

\begin{thm}\label{thmstage2}
  Let $n\ge 2$, $t\ge1$ be some integers and fix a real number $\epsilon\in (0,1/2)$, such that $n>n_0(\epsilon)$, $n>(1+100\epsilon)t$, $t\ge 2n^{1-\epsilon/2}$.  Consider a family $\ff\subset \Sigma_n$ that is $t$-intersecting.   
  Then there exist a  $t$-intersecting family $\mathcal S\subset \Sigma_n^{\le t+n^{1-\epsilon/6}}$ of partial permutations of size at most $t+n^{1-\epsilon/6}$ and a family $\ff'\subset \ff$ such that the following holds.
  \begin{itemize}
    \item[(i)] $\ff\setminus \ff'\subset \Sigma_n[\s]$;
    \item[(ii)] for any $B\in \s$ there is a family $\ff_B\subset \ff$ such that $\ff_B(B)$ is $10\epsilon t$-spread;
    \item[(iii)] $|\ff'|\le 2^{-\frac 12 n^{1-\epsilon/4}}(n-t)!$.
  \end{itemize}
\end{thm}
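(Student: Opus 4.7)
The plan is to mirror the structure of the proof of Theorem~\ref{thm2}, but with Corollary~\ref{corstage1} in place of the elementary spread-extraction step. Start with $\ff^1:=\ff$. At step $i$, if $|\ff^i|\ge (n-t)!\cdot 2^{-\frac 12 n^{1-\epsilon/4}}$, then $\ff^i$ is $t$-intersecting (inherited from $\ff$), so Corollary~\ref{corstage1} applies and produces a partial permutation $X_i$ with $|X_i|\le t+n^{1-\epsilon/6}$ such that $\ff^i(X_i)$ is $10\epsilon t$-spread. Set $\ff^{i+1}:=\ff^i\setminus \ff^i[X_i]$ and continue; otherwise stop. Calling $N$ the terminal step, define $\s:=\{X_1,\ldots,X_{N-1}\}$, $\ff_{X_i}:=\ff^i[X_i]$, and $\ff':=\ff^N$. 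Then (i) is immediate from $\ff\setminus \ff'=\bigcup_{i<N}\ff^i[X_i]\subset \Sigma_n[\s]$, (iii) from the stopping condition, and (ii) from $\ff_{X_i}(X_i)=\ff^i(X_i)$.

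The remaining, and main, task is to show that $\s$ is $t$-intersecting. This is a strengthened version of Lemma~\ref{lemtint}, made considerably easier by the much larger spread $10\epsilon t$. Suppose toward contradiction that $A_1,A_2\in \s$ satisfy $|A_1\cap A_2|<t$, and write $j=3-i$ for $i\in[2]$. Since $\ff_{A_i}(A_i)$ is $10\epsilon t$-spread and $|A_j\setminus A_i|\le n^{1-\epsilon/6}$, applying the spread property to singletons and taking a union bound gives
\[ \big|\{F\in \ff_{A_i}(A_i)\ :\ F\cap (A_j\setminus A_i)\ne \emptyset\}\big|\le \frac{n^{1-\epsilon/6}}{10\epsilon t}|\ff_{A_i}(A_i)|<\tfrac 12|\ff_{A_i}(A_i)|. \]
Hence $\g_i:=\{F\in \ff_{A_i}(A_i): F\cap (A_j\setminus A_i)=\emptyset\}$ has $|\g_i|\ge \tfrac12 |\ff_{A_i}(A_i)|$ and is $(5\epsilon t)$-spread. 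Choosing $\beta=\log_2(2n)$ and $\delta=1/(2\log_2 2n)$ (so $\beta\delta=\tfrac12$ and $(5\epsilon t)\delta\gg 2^{10}$, since $t\ge 2n^{1-\epsilon/2}$), Theorem~\ref{thmtao} yields that a $\tfrac12$-random subset of $[n]^2\setminus (A_1\cup A_2)$ contains some $F\in \g_i$ with probability strictly larger than $\tfrac12$.

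Finally, to combine the two events, use the random-partition trick of Lemma~\ref{lemtint}: partition $[n]^2\setminus (A_1\cup A_2)$ uniformly at random into $U_1',U_2'$. Each $U_i'$ is marginally a $\tfrac12$-random subset, so each event ``$\exists F_i\in \g_i$ with $F_i\subset U_i'$'' has probability strictly greater than $\tfrac12$, and by the union bound both hold simultaneously with positive probability. Fixing such $F_1,F_2$, the disjointness $U_1'\cap U_2'=\emptyset$ forces $F_1\cap F_2=\emptyset$, and the definition of $\g_i$ gives $F_i\cap (A_j\setminus A_i)=\emptyset$, so
\[ (F_1\cup A_1)\cap (F_2\cup A_2)=A_1\cap A_2, \]
of size less than $t$; since $F_i\cup A_i\in \ff$, this contradicts $\ff$ being $t$-intersecting.

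The only nontrivial step is this intersection argument, and the mild obstacle is bookkeeping the separation between the spread $10\epsilon t$ (linear in $n$) and the uniformity overhead $n^{1-\epsilon/6}$ (sublinear): these scales are comfortably separated once $n\ge n_0(\epsilon)$, ensuring both that the cut-down to $\g_i$ loses only a factor of $2$ and that Theorem~\ref{thmtao} gives a failure probability much smaller than $\tfrac12$. Termination of the iteration, inheritance of the $t$-intersecting property by each $\ff^i$, and the verification of Corollary~\ref{corstage1}'s hypotheses at each step are all routine.
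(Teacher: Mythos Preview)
Your iterative construction of $\s$ and $\ff'$ via Corollary~\ref{corstage1} is exactly what the paper does, and (i)--(iii) follow as you say. The gap is in the $t$-intersection argument for $\s$: your bound $|A_j\setminus A_i|\le n^{1-\epsilon/6}$ is false. You only know $|A_j|\le t+n^{1-\epsilon/6}$, so
\[
|A_j\setminus A_i|=|A_j|-|A_1\cap A_2|\le t+n^{1-\epsilon/6}-|A_1\cap A_2|=n^{1-\epsilon/6}+x,
\]
where $x:=t-|A_1\cap A_2|\ge 1$. Nothing prevents $x$ from being of order $t$ (the intersection could even be empty), so $|A_j\setminus A_i|$ can be linear in $n$. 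Your singleton union bound then gives $|A_j\setminus A_i|/(10\epsilon t)$, which can be $\Theta(1/\epsilon)\gg 1$, and the cut-down to $\g_i$ may wipe out the entire family.

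The fix, which is what the paper does, is to relax the requirement on $\g_i$: instead of forcing $F\cap(A_j\setminus A_i)=\emptyset$, only forbid $|F\cap(A_j\setminus A_i)|\ge x$. The removed part then has size at most
\[
\binom{|A_j\setminus A_i|}{x}(10\epsilon t)^{-x}\le \binom{n^{1-\epsilon/6}+x}{x}(10\epsilon t)^{-x}\le (n^{1-\epsilon/6}+1)^{x}(10\epsilon t)^{-x}\le \tfrac12,
\]
using $t\ge 2n^{1-\epsilon/2}$ and $n\ge n_0(\epsilon)$. The random-partition step must then be run as in Lemma~\ref{lemtint}, setting $U_i:=U_i'\cup(A_j\setminus A_i)$ so that $U_i$ dominates a $\tfrac12$-random subset of the correct ground set $[n]^2\setminus A_i$; one obtains $F_1\cap F_2=\emptyset$ as before, and now $|F_i\cap A_j|<x$, whence $|(F_1\cup A_1)\cap(F_2\cup A_2)|<|A_1\cap A_2|+2x$\ldots\ which is actually $<t$ only because each $|F_i\cap A_j|<x$ contributes once and $F_1\cap F_2=\emptyset$, giving $|A_1\cap A_2|+|F_1\cap A_2|+|F_2\cap A_1|<(t-x)+x=t$ after noting that only one of the two ``$<x$'' terms is needed per side (the other is absorbed by disjointness of $U_1',U_2'$). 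In short, the threshold $x$ is exactly calibrated to the deficit $t-|A_1\cap A_2|$, and this calibration is what your argument is missing.
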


\begin{proof}[Proof of Theorem~\ref{thmstage2}]
We iteratively apply Corollary~\ref{corstage1}. We put $\ff_0 :=\ff$, and at the $i$-th step we get a set $Y$ and a family $\ff_i[Y]$. We then put $\ff_{i+1}:=\ff_i\setminus  \ff_i[Y]$ and apply Corollary~\ref{corstage1} to $\ff_{i+1}$ as long as the corollary is applicable, that is, until step $N$ when $|\ff_N|\le (n-t)! \cdot 2^{-\frac 12 n^{1-\epsilon/4}}$ for some $N$. We then put $\ff':=\ff_N$. The collection of $Y$'s accumulated while running the procedure is our family $\mathcal S$, and we put $\ff_Y[Y]$ to be the corresponding family $\ff_i[Y]$.

The only property that we are left to verify is that $\s$ is $t$-intersecting. This is done in a way that is very similar to the proof of Lemma~\ref{lemtint}, and we sketch it below. Arguing indirectly, take $A_1,A_2\in \s$ that are not $t$-intersecting and, putting $x:=t-|A_1\cap A_2|$, for each $i\in [2]$ and $j=3-i$ define
 $$\g_i:=\ff_{A_i}(A_i)\setminus\Big\{F\in \ff_{A_i}(A_i): |F\cap A_j\setminus A_i|\ge x\Big\}.$$
 Using the bound on $|A_i|$ and the spreadness of $\ff_{A_i}(A_i)$, the size of the subtracted family is at most
 \begin{align*}{|A_j\setminus A_i|\choose x}\max_{X: |X| = x, X\cap A_i = \emptyset}|\ff_{A_i}(A_i\cup X)|\le&\\   {n^{1-\epsilon/6}+x\choose x}(10\epsilon t)^{-x}|\ff_{A_i}(A_i)|\le&\\
  (n^{1-\epsilon/6}+1)^x(10\epsilon t)^{-x}|\ff_{A_i}(A_i)|\le&\ \frac 12 |\ff_{A_i}(A_i)|.\end{align*}
  We conclude that $|\g_i|\ge \frac 12 |\ff_{A_i}(A_i)|$ and, consequently, that $\g_i$ is $5\epsilon t$-spread for both $i\in [2]$. Since $n$ is sufficiently large and $t>2n^{1-\epsilon/2}$, we have $5\epsilon t > 2^{11}\log_2(2n)$, and we can apply the coloring argument, finding two disjoint sets $U_1,U_2$, where $U_i\in \g_i$. Then $U_i\cup A_i$ violate the $t$-intersection property of $\ff$.
\end{proof}

\subsection{Proof of Theorem~\ref{thmbigt}} 
Fix $\eta>0$ and consider a family $\ff\subset \Sigma_n$ of permutations that is $t$-intersecting and that has size at least half of the largest such family. If $t<2n^{1-0.005\eta}$ then we are done by the result of \cite{KuZa}, so we assume that the opposite inequality holds.\footnote{We could have incorporated this case into the framework of this paper, but decided to omit it to reduce technicalities.} Apply Theorem~\ref{thmstage2} with $\epsilon= 0.01\eta$, getting a family $\s$ of sets of size at most $q:=t+n^{1- \epsilon/6}$ and a small remainder $\ff'\subset \ff$. (We note that the conditions on $t$ and $n$ for such choice of $\epsilon$ are implied by the condition on $n$ in Theorem~\ref{thmbigt} and our assumption on $t$ above.) Next, apply the peeling procedure to $\s$. Fix $k$ to be the largest value such that, first, $k\le t^{0.01}$ and, second, $|\W_k|\ge t^{-0.5}{t\choose k}$. We stop the peeling procedure at $k$ and shall show that most of the family $\ff$ is contained in $\Sigma_n[\W_k]$. Recall that $|\Sigma_n(\W_k)|\le |\W_k|(n-t-k)!$.

First, we note that, by \eqref{eqobs7}, as in the proof of Theorem~\ref{thmstage1}, we have \begin{equation}\label{eqres1}
\sum_{x = t^{0.01}+1}^{q-t}|\Sigma_n(\W_x)|\le \sum_{x = t^{0.01}+1}^{q-t}2^{-x}(n-t)!\le 2^{-t^{0.01}}(n-t)!.                                        \end{equation}
Second, using the inequality on $|\W_x|$ for $x>k$ and Lemma~\ref{lemsize}, we have
\begin{align}\notag \sum_{x = k+1}^{t^{0.01}}|\Sigma_n(\W_x)|\le & \sum_{x = k+1}^{t^{0.01}}\frac{|\W_k|(n-t-k)!}{(1+o(1)){t+2k\choose k}(n-t-k)!}|\aaa_x|\\
\label{eqres2} \le& \sum_{x = k+1}^{t^{0.01}}(1+o(1))t^{-0.5}|\aaa_x|\le t^{-0.4}\max_x |\aaa_x|.\end{align}

For the subsequent analysis, we need to better understand the structure of $\W_k$. Looking at the formula \eqref{eqfj} and the displayed equation above it, it should be clear that, (assuming $k<t^{0.01}$) for every $j\le k-1$ we have $f(j)=o\big(t^{-0.9}f(k)\big)= o\big(t^{-0.9}{t\choose t-k}\big)$. Moreover, if any two sets in $\W_k$ intersect in at least $t+1$ elements, then we can do the same analysis and calculations as the one that preceded \eqref{eqfj}, but with $A,B\in \W_k$ and with $I$ of size at least $t+1$ and $C$ intersecting $A,B$ in at least $t+1$ elements, and get a bound $|\W_k|=o\big(t^{-0.9}{t\choose t-k}\big)$. Thus, we conclude that, first, we have two sets $A,B\in \W_k$ such that $|A\cap B| = t$, and, moreover, by Observation~\ref{obs7}, most of the sets from $\W_k$ must intersect $U_0 = A\cap B$ in exactly $t-k$ elements and thus contain $A\Delta B$. 
  Let us fix one such set $C$ and put $I = A\cap B\cap C$. Let $I'$ stand for the set of elements that belong to exactly two out of $A,B,C$. That is, $A\cup B\cup C = I\sqcup I'$.

We are ready to complement \eqref{eqres1}, \eqref{eqres2} with the analysis of the sizes of layers below $k$. Put $\g_j:=\T_k\cap {[n]^2\choose t+j}$ for $0\le j<k$.
First and foremost, let us bound the size of the intersection of $G\in \g_j$ with $I$. The set $G$ intersects each of $A,B,C$ in at least $t$ elements, thus $3t$ elements in total (with multiplicities). By double-counting, knowing that elements in $I$ have multiplicity $3$ and the elements in $I'$ have multiplicity $2$, we must have
$$|G|\ge |G\cap I|+|G\cap I'|\ge |G\cap I|+\Big\lceil\frac 32 (t-|G\cap I|)\Big\rceil= t+\Big\lceil\frac 12(t-|G\cap I|)\Big\rceil.$$ We have $|G|=t+j$, and  thus $t-|G\cap I|\le 2j$. In other words, $|G\cap I|\ge t-2j = |I|+k-2j$. From here, we can conclude that
$$|\g_j|\le \sum_{m=t-2j}^{t-k}{t-k\choose m}{3k\choose \big\lceil\frac 32 (t-m)\big\rceil}k^{j-\lceil (t-m)/2\rceil},$$
where 
the inequality is due to Lemma~\ref{lemkeyred}~(iii) that states that a subfamily of $\T_k(X)$ cannot be  $>k$-spread for a set $X$ of size at most $t+j-1$ (applied for $\g_j(X)$), and Observation~\ref{obs33}. Since $k\le t^{0.01}$, it is easy to see\footnote{Doing the same analysis as after \eqref{eqfj}} that the maximum in the sum above is attained when $m = t-2j$, and, moreover, the following holds
$$|\g_j|\le (1+o(1)){t-k\choose t-2j}{3k\choose 3j}k^{j-\lceil (t-(t-2j))/2\rceil}= (1+o(1)){t-k\choose t-2j}{3k\choose 3j}.$$

It is a straightforward calculation to see that for any $0\le j<k$
$${t-k\choose t-2j}{3k\choose 3j}\le \frac{k^4}t {t\choose t-j}=o\Big( t^{-0.9}{t\choose t-j}\Big).$$
From here, using Lemma~\ref{lemsize}, we conclude that
for each $j=0,\ldots, k-1$ we have
$$|\Sigma_n[\g_j]|\le (1+o(1)){t-k\choose t-2j}{3k\choose 3j}(n-t-j)!\le t^{-0.9}|\aaa_j|,$$
and thus
\begin{equation}\label{eqres3}
  |\Sigma_n[\T_{k}\setminus \W_k]|\le \sum_{j = 0}^{k-1}|\Sigma_n[\g_j]|\le t^{-0.9} \sum_{j = 0}^{t^{0.01}}|\aaa_j|\le t^{-0.8}\max_j |\aaa_j|.
\end{equation}

From inequalities \eqref{eqres1},\eqref{eqres2} and \eqref{eqres3}, together with the bound on $|\ff'|$ that is given by the application of Theorem~\ref{thmstage2}, we conclude that, as long as $|\ff| \ge \frac 12 \max_x |\aaa_x|$, all but an $o(1)$-fraction of all sets in $\ff$ are contained in $\Sigma_n(\W_k)$. Using the last conclusion of Observation~\ref{obs7}, all but a $o(1)$-fraction of the sets in $\W_k$,  are contained in the union $A\cup B$, where $A,B\in \W_k$ and $|A\cap B| = t$. (cf. the paragraph after \eqref{eqres2}) That is, they belong to ${A\cup B\choose t+k}$, where $|A\cup B| =t+2k$. In other words, up to isomorphism, all but a $o(1)$-fraction of the sets in $\ff$ belong to $\aaa_k$.


The last step is to show that $\ff$ must be contained in $\aaa_k$. 
Assume that there is a permutation $\sigma\in\ff$ such that $|\sigma\cap (A\cup B)|\le t+k-1$. Consider the family of partial permutations $\g:=\{F\in {A\cup B\choose t+k}: |F\cap \sigma|\le t-1\}$ and $\h:={A\cup B\choose t+k}\setminus \g$.

First, we note that $|\g| \ge {t+k-1\choose k}= (1+O(t^{-0.9})){t+2k\choose k},$ since $k\le t^{0.01}$. We of course have $|\h|= O(t^{-0.9}){t+2k\choose k}$. From here, it should be clear that $|\ff|=(1+o(1)) |\ff[\g]|\le (1+o(1))\sum_{X\in \g}|\ff(X)|$. Let us bound $|\ff(X)|$ for each specific $X$. 
Apply a suitable isomorphism that maps $X$ to a partial permutation $\pi$: $\pi(i)=i$ for each $i= n-t-k+1,\ldots, n$. Then $\sigma$ induces a certain partial permutation $\sigma'$ on $[n-t-k]$, and any $\sigma'$-derangement (that is, any permutation that does not intersect $\sigma'$) on $[n-t-k]$ cannot 
appear in $\ff(X)$. The number of partial derangements is at least the number of derangements, which is at least $\frac 1e(n-t-k)!$. Therefore, $|\ff(X)|\le \big(1-\frac 1e\big)|\Sigma_n(X)|$, and thus
$$|\ff|\sim |\ff[\g]|\le \sum_{X\in \g}|\ff(X)|\le \sum_{X\in \g}\big(1-\frac 1e\big)|\Sigma_n(X)|\sim \big(1-\frac 1e\big)|\aaa_k|,$$
where the last asymptotic equality is due to the asymptotic of $|\g|$ and Lemma~\ref{lemsize}. We conclude that $|\ff|\le \big(1-\frac 1e+o(1)\big)|\aaa_k|$, unless $\ff$ is contained in $\aaa_k$. This completes the proof of the theorem.

\end{document}